\title{Nonparametric adaptive estimation of order 1 Sobol indices in stochastic models, with an application to Epidemiology\footnote{This study was funded by the French Agence Nationale de Recherche sur le Sida et les H\'epatites virales (ANRS),
grant number 95146 and by Labex CEMPI (ANR-11-LABX-0007-01). V.C.T. also acknowledge support from the Chaire ``Mod\'elisation Math\'ematique et
Biodiversit\'e'' of Veolia Environnement-Ecole Polytechnique-Museum National d'Histoire Naturelle-Fondation X and ANR CADENCE (ANR-16-CE32-0007).
The authors would like to thank the working group previously involved in the development of the model for HCV transmission among PWID:
Sylvie Deuffic-Burban, Jean-St\'ephane Dhersin, Marie Jauffret-Roustide and Yazdan Yazdanpanah.
Numerical results presented in this paper were carried out using the regional computational cluster supported by Universit\'e Lille 1,
CPER Nord-Pas-de-Calais/FEDER, France Grille, CNRS. We would like to thank the technical staff of the CRI-Lille 1 center.}}
\author{Gwena\"elle Castellan\footnote{Univ. Lille, CNRS, UMR 8524 - Laboratoire Paul Painlev\'e, F-59000 Lille, France; E-mail: \texttt{gwenaelle.castellan@math.univ-lille1.fr}}, Anthony Cousien\thanks{INSERM, IAME, UMR 1137, Universit\'e Paris Diderot, Sorbonne Paris Cit\'e, F-75018 Paris, France; E-mail: \texttt{anthony.cousien@gmail.com}}, Viet Chi Tran\thanks{LAMA, Univ Gustave Eiffel, UPEM, Univ Paris Est Creteil, CNRS, F-77447, Marne-la-Vall\'ee, France; E-mail: \texttt{chi.tran@u-pem.fr} \newline Authors are listed in alphabetical order.}}
\date{\today}
\numberwithin{equation}{section}
\def\E{\mathbb{E}}
\def\ind{{\mathchoice {\rm 1\mskip-4mu l} {\rm 1\mskip-4mu l}
{\rm 1\mskip-4.5mu l} {\rm 1\mskip-5mu l}}}
\newcommand{\be} {\begin{equation}}
\newcommand{\ee} {\end{equation}}
\newcommand{\bea} {\begin{eqnarray}}
\newcommand{\eea} {\end{eqnarray}}
\newcommand{\Bea} {\begin{eqnarray*}}
\newcommand{\Eea} {\end{eqnarray*}}
\theoremstyle{plain}
\newtheorem{theorem}{Theorem}[section]
\newtheorem{prop}[theorem]{Proposition}
\newtheorem{lemma}[theorem]{Lemma}
\newtheorem{corollary}[theorem]{Corollary}
\newtheorem{definition}[theorem]{Definition}
\begin{document}
\vspace{-1cm}
\maketitle
\vspace{-1cm}

\begin{abstract}
Global sensitivity analysis is a set of methods aiming at quantifying the
contribution of an uncertain input parameter of the model (or combination
of parameters) on the variability of the response. We consider here the
estimation of the Sobol indices of order 1 which are commonly-used
indicators based on a decomposition of the output's variance. In a
deterministic framework, when the same inputs always give the same outputs,
these indices are usually estimated by replicated simulations of the model.
In a stochastic framework, when the response given a set of input
parameters is not unique due to randomness in the model, metamodels are
often used to approximate the mean and dispersion of the response by
deterministic functions. We propose a new non-parametric estimator without
the need of defining a metamodel to estimate the Sobol indices of order 1.
The estimator is based on warped wavelets and is adaptive in the regularity
of the model. The convergence of the mean square error to zero, when the
number of simulations of the model tend to infinity, is computed and an
elbow effect is shown, depending on the regularity of the model.
Applications in Epidemiology are carried to illustrate the use of
non-parametric estimators.
\end{abstract}

\noindent \textbf{Keywords:} Sensitivity analysis in a stochastic framework; Sobol indices of order 1; adaptive non-parametric inference; warped wavelets; model selection; applications to epidemiology; SIR model; spread of the Hepatitis Virus C among drug users.  \\
\noindent \textbf{MSC2010:} 49Q12; 62G08; 62P10.\\

%s1 #&#
\section{Introduction}

Sensitivity analysis is widely used for modelling studies in public
health, since the number of parameters involved is often high (see e.g.
\cite{thabane,wudhingragambhir} and references therein). It can be
applied to a variety of problems, and we focus here on the question of
evaluating the impact of input parameters on an output of a model. If
we assume that the output of the model, $y\in \mathbb{R}$, depends on
$p\in \mathbb{N}$ input parameters $x=(x_{1},...x_{p}) \in \mathbb{R}
^{p}$ through the relation $y=f(x)$, we are interested here in
evaluating how the parameter $x_{\ell }$, for $\ell \in \{1,\dots ,p
\}$ affects $y$. The vector $x$ of the input parameters can be
considered as a realisation of a set of random variables $X=(X_{1},...X
_{p})$, with a known distribution and with possibly correlated
components. Also, sensitivity analyses in epidemiology deal with
deterministic models although in many cases, randomness and nuisance
parameters have to be included, which is one of the goal of the present
paper.%

In public health, most of the studies on sensitivity analysis are
performed by letting the input parameters vary on a deterministic grid,
or by sampling all parameters from a prior probability distribution
\cite{Briggs}. However, there exist other ways of measuring the
influence of the inputs on the output. In this article, we are
interested in Sobol indices \cite{sobol}, which are based on an
ANOVA decomposition (see
\cite{saltellilivre2008,jacquesthesis,jacques} for a review). Denoting
by $Y = f(X)$ the random response, the first order Sobol indices can be
defined for $\ell \in \{1,\dots , p\}$ by
%
%e1 #&#
\begin{equation}
S_{\ell }=\frac{\mbox{Var}\big (\mathbb{E}[Y\ |\ X_{\ell }]\big )}{
\mbox{Var}(Y)}.
\label{def:Sobol1}
\end{equation}
This index represents the fraction of the variance of the output $Y$ due
to the input $X_{\ell }$. Several numerical procedures to estimate the
Sobol indices have been proposed, in particular by Jansen
\cite{jansen} (see also
\cite{saltellilivre2000,saltellilivre2008}). These estimators, that we
recall in the sequel, are based on Monte-Carlo simulations of
$(Y,X_{1}\dots X_{p})$.

The literature focuses on deterministic relations between the input and
output parameters. In a stochastic framework where the model response
$Y$ is not unique for given input parameters, few works have been done,
randomness being usually limited to input variables. Assume that:
%
%e2 #&#
\begin{equation}
Y=f(X,\varepsilon ),
\label{modelsto}
\end{equation}
where $X=(X_{1},\dots X_{p})$ still denotes the random variables
modelling the uncertainty of the input parameters and where
$\varepsilon $ is a noise variable. In this paper, we will assume that $f$ (and hence $Y$) is bounded by $M>0$. When noise is added in the model,
the classical estimators do not always work: $Y$ can be very sensitive
to the addition of $\varepsilon $. Moreover, this variable is not always
controllable by the user.

When the function $f$ is linear, we can refer to
\cite{fortkleinlagnouxlaurent}. In the literature, meta-models are used:
approximating the mean and the dispersion of the response by
deterministic functions allows to come back to the classical
deterministic framework (e.g. Janon et al.
\cite{janonnodetprieur}, Marrel et al.
\cite{marrelioossdaveigaribatet}). We study here another point of view,
which is based on the non-parametric statistical estimation of the term
$\mbox{Var}\big (\mathbb{E}[Y\ |\ X_{\ell }]\big )$ appearing in the
numerator of \eqref{def:Sobol1}. Approaches based on the Nadaraya-Watson
kernel estimator have been proposed by Da Veiga and Gamboa
\cite{daveiga-gamboa} or Sol\'{\i }s \cite{solisthesis}. We propose
here a new approach based on warped wavelet decompositions introduced
by Kerkyacharian and Picard \cite{kerkyacharianpicard}. An advantage
of these non-parametric estimators is that their computation requires
less simulations of the model. For Jansen estimators, the number of
calls of $f$ required to compute the sensitivity indices is
$n(p+1)$, where $n$ is the number of independent random vectors
$(Y^{i},X^{i}_{1},\dots X^{i}_{p})$ ($i\in \{1,\dots n\}$) that are
sampled for the Monte-Carlo procedure, making the estimation of the
sensitivity indices time-consuming for sophisticated models with many
parameters.%

In Section \ref{section:nonparam}, we present the non-parametric
estimators of the Sobol indices of order 1 in the case of the stochastic
model \eqref{modelsto} and study their convergence rates. The
approximation of $\mbox{Var}\big (\mathbb{E}[Y\ |\ X_{\ell }]\big )$ is
very important to obtain the speed of convergence. When the conditional
expectation is estimated by a Nadaraya-Watson kernel estimator, these
results have been obtained by Sol\'{\i }s \cite{solisthesis} and Da
Veiga and Gamboa \cite{daveiga-gamboa}. The use of wavelets for
estimating the conditional expectation in Sobol indices is new to our
knowledge. Wavelet estimators are more tractable than kernel estimators
in that we do not have to handle approximations of quotients. We derive
the convergence rate for the estimator based on wavelets, using ideas
due to Laurent and Massart \cite{laurentmassart} who considered
estimation of quadratic functionals in a Gaussian setting. Because we
are not necessarily in a Gaussian setting here, we rely on empirical
processes and use sophisticated technology developed by Castellan
\cite{castellan}. Contrarily to the kernel estimators for which
convergence rates rely on assumptions on the joint distribution of
$Y$ and of $X_{1},\dots X_{p}$, we have an upper-bound for the
convergence rates that depend on the regularity of the output $Y$ with
respect to the inputs $X_{1},\dots X_{p}$. Moreover, our estimator is
adaptive and the exact regularity does not need to be known to calibrate
our non-parametric wavelet estimator. Since we estimate covariance
terms, we obtain elbow effects: there is a threshold in the regularity
defining two different regimes with different speeds of convergence for
the estimator. In our case, this allows us to recover convergence rates
in $1/n$ when the model exhibits sufficient regularities. Further
discussion is carried in the body of the article. These estimators are
then computed and compared for toy examples introduced by Ishigami
\cite{ishigamihomma}.

In Section \ref{section:epidemio}, we then address models from
Epidemiology for which non-parametric Sobol estimators have never been
used to our knowledge. First, the stochastic continuous-time SIR model
is considered, in which the population of size $N$ is divided into three
compartments: the susceptibles, infectious and removed individuals (see
e.g. \cite{andersonbritton} for an introduction). Infections and
removals occur at random times whose laws depend on the composition of
the population and on the infection and removal parameters $\lambda $
and $\mu $ as input variables. The output variable $Y$ can be the
prevalence or the incidence at a given time $T$ for instance. $Y$
naturally depends on $\lambda $, $\mu $ and on the randomness underlying
the occurrence of random times. Second, we consider a stochastic
multi-level epidemic model for the transmission of Hepatitis C virus
(HCV) among people who inject drugs (PWID) that has been introduced by
Cousien et al. \cite{cousien1,cousien2}. This model describes an
individual-based population of PWID that is structured by compartments
showing the state of individuals in the heath-care system and by a
contact-graph indicating who injects with whom. Additionally the advance
of HCV in each patient is also taken into account. The input variables
are the different parameters of the model. Ouputs depend on these
inputs, on the randomness of event occurrences and on the randomness of
the social graph. We compare the sensitivity analysis performed by
estimating the Sobol indices of order 1 with the naive sensitivity
analysis performed in \cite{cousien1,cousien2} by letting the
parameters vary in an \textit{a priori} chosen windows.%

In the sequel, $C$ denotes a constant that can vary from line to line.

%s2 #&#
\section{A non-parametric estimator of the Sobol indices of order 1}%
\label{section:nonparam}

Denoting by $V_{\ell }=\mathbb{E}\big (\mathbb{E}^{2}(Y\ |\ X_{\ell })
\big )$, we have:
%
%e3 #&#
\begin{equation}
S_{\ell }= \frac{V_{\ell }-\mathbb{E}(Y)^{2}}{\mbox{Var}(Y)},
\label{def:S_ell}
\end{equation}
which can be approximated by
%
%e4 #&#
\begin{equation}
\widehat{S}_{\ell }=\frac{\widehat{V}_{\ell }- \bar{Y}^{2}}{
\widehat{\sigma }^{2}_{Y}}
\label{def:generique}
\end{equation}
where
\begin{equation*}
\bar{Y}=\frac{1}{n}\sum _{j=1}^{n} Y_{j}\mbox{ and }\widehat{\sigma }
^{2}_{Y}=\frac{1}{n} \sum _{j=1}^{n} (Y_{j}-\bar{Y})^{2}
\end{equation*}
are the empirical mean and variance of $Y$. We can think of several
approximations $\widehat{V}_{\ell }$ of $V_{\ell }$, for example, based
on Nadaraya-Watson and on warped wavelet estimators. At an advanced
stage of this work, we learned that the Nadaraya-Watson-based estimator
of Sobol indices of order 1 had also been proposed and studied in the
PhD of Sol\'{i}s \cite{solisthesis}. Using a result on estimation
of covariances by Loubes et al. \cite{loubesmarteausolis}, they
obtain an elbow effect. However their estimation is not adaptive, and
requires the knowledge of the regularity of the joint density function
of $(X_{\ell },Y)$. For the warped wavelet estimator, we propose a model
selection procedure based on a work by Laurent and Massart
\cite{laurentmassart} to make the estimator adaptive.

%s2.1 #&#
\subsection{Definitions}%
\label{section:def}

Assume that we have $n$ independent couples $(Y^{i}, X_{1}^{i},\dots
X_{p}^{i})$ in $\mathbb{R}\times \mathbb{R}^{p}$, for $i\in \{1,
\dots , n\}$, generated by \eqref{modelsto}.

Our wavelet estimator is based on a warped wavelet decomposition of\break
$\mathbb{E}(Y\, |\, X_{\ell }=x)$. Let us denote by $L^{2}(\mu )$ the
space of real functions that are square integrable with respect to the
measure $\mu $. When we do not specify $\mu $, $L^{2}$ denotes the space
of real functions that are square integrable with respect to the
Lebesgue measure on $\mathbb{R}$. In the sequel, we denote by
$\langle f,g\rangle =\int _{\mathbb{R}}f(u)g(u)du$, for $f,g\in L^{2}$,
the usual scalar product of $L^{2}$. The associated $L^{2}$-norm is
$\|f\|_{2}^{2}=\int _{\mathbb{R}}f^{2}(u)du$. Wavelet estimators are
projection estimators, and $L^{2}$ is a natural setting to work with.
But when dealing with a probability framework, one can face the need to
consider different Hilbert structures. Let now $\mu $ be a probability
measure with cumulative distribution function $G$. Warped wavelet
decompositions introduced by Kerkyacharian and Picard
\cite{kerkyacharianpicard} allow, in a very natural way, to consider
wavelet decompositions in $L^{2}(\mu )$: composing any Hilbert basis of
$L^{2}$ by $G$ provides a Hilbert basis of $L^{2}(\mu )$. See
\cite{chagny,kerkyacharianpicard} for more details.
\par
Let us denote by $G_{\ell }$ the cumulative distribution function of
$X_{\ell }$ and let $(\psi _{jk})_{j\geq -1,k\in \mathbb{Z}}$ be a
Hilbert wavelet basis of $L^{2}$. The wavelet $\psi _{-10}$ is the father
wavelet, and for $k\in \mathbb{Z}$, $\psi _{-1k}(x)=\psi _{-10}(x-k)$. The
wavelet $\psi _{00}$ is the mother wavelet, and for $j\geq 0$,
$k\in \mathbb{Z}$, $\psi _{jk}(x)=2^{j/2} \psi _{00}(2^{j} x -k)$. In the
sequel, we will consider wavelets with compact support. The warped
wavelet basis that we will consider is $(\psi _{jk}\circ G)_{j\geq -1,k
\in \mathbb{Z}}$.

%d2.1 #&#
\begin{definition}
\label{def:estimateur2}%
Let us define for $j\geq -1$, $k\in \mathbb{Z}$,
%
%e5 #&#
\begin{equation}
\widehat{\beta }^{\ell }_{jk}=\frac{1}{n} \sum _{i=1}^{n} Y_{i} \psi
_{jk}(G_{\ell }(X_{\ell }^{i})).
\end{equation}
Then, we define the (block thresholding) estimator of $S_{\ell }$ as
\eqref{def:generique} with
%
%e6 #&#
\begin{align}
\widehat{V}_{\ell }=\sum _{j= -1}^{J_{n}} \Big [ \sum _{k\in \mathbb{Z}}
\big (\widehat{\beta }^{\ell }_{jk}\big )^{2} - w(j)\Big ]\ind _{
\sum _{k\in \mathbb{Z}}\big (\widehat{\beta }^{\ell }_{jk}\big )^{2}
\geq w(j)},
\label{def:hat_theta}
\end{align}
where $w(j)=K \Big (\frac{2^{j}+\log 2}{n}\Big )$, $J_{n}:=\big [\log
_{2}\big (\sqrt{n}\big )\big ]$ (with $[.]$ denoting the integer part),
and $K$ is a positive constant.
\end{definition}

Let us present the idea explaining the estimator proposed in Definition
\ref{def:estimateur2}. Let us introduce centered random variables
$\eta _{\ell }$ such that
%
%e7 #&#
\begin{equation}\label{eq:7}
Y=f(X,\varepsilon )=\mathbb{E}(Y\, |\, X_{\ell })+\eta _{\ell }.
\end{equation}
Let $g_{\ell }(x)=\mathbb{E}(Y\, |\, X_{\ell }=x)$ and $h_{\ell }(u)=g
_{\ell }\circ G_{\ell }^{-1}(u)$. $h_{\ell }$ is a function from
$[0,1]\mapsto \mathbb{R}$ that belong to $L^{2}$ since $Y\in L^{2}$.
Then
%
%e8 #&#
\begin{align}\label{eq:8}
& h_{\ell }(u)=\sum _{j\geq -1} \sum _{k\in \mathbb{Z}} \beta _{jk}^{
\ell }\psi _{jk}(u),
\\
& \mbox{ with }\quad \beta _{jk}^{\ell }=\int _{0}^{1} h_{\ell }(u)
\psi _{jk}(u)du=\int _{\mathbb{R}}g_{\ell }(x) \psi _{jk}(G_{\ell }(x)) G
_{\ell }(dx).
\nonumber
\end{align}
Notice that the sum in $k$ is finite because the function $h_{\ell }$
has compact support in $[0,1]$. It is then natural to estimate
$h_{\ell }(u)$ by
%
%e9 #&#
\begin{equation}
\widehat{h}_{\ell }=\sum _{j\geq -1}\sum _{k\in \mathbb{Z}}
\widehat{\beta }_{jk}^{\ell }\psi _{jk}(u).
\label{def:hat_h}
\end{equation}
We can then rewrite $V_{\ell}$ as:
%
%e10 #&#
\begin{align}
V_{\ell }=\mathbb{E}\big (\mathbb{E}^{2}(Y\, |\, X_{\ell })\big )=
&
\int _{\mathbb{R}} G_{\ell }(dx)\Big (\sum _{j\geq -1} \sum _{k\in
\mathbb{Z}} \beta _{jk}^{\ell }\psi _{jk}\big (G_{\ell }(x)\big )\Big )^{2}
\nonumber
\\
=
& \int _{0}^{1} \Big (\sum _{j\geq -1} \sum _{k\in \mathbb{Z}} \beta
_{jk}^{\ell }\psi _{jk}(u)\Big )^{2} \ du
\nonumber
\\
=
& \sum _{j\geq -1}\sum _{k\in \mathbb{Z}} \big ( \beta _{jk}^{\ell }
\big )^{2}=\|h_{\ell }\|^{2}_{2}.
\end{align}
Adaptive estimation of $\|h_{\ell }\|^{2}_{2}$ has been studied in
\cite{laurentmassart}, which provides the block thresholding estimator
$\widehat{V}_{\ell }$ in Definition \ref{def:estimateur2}. The idea is:
1) to sum the terms $\big ( \beta _{jk}^{\ell }\big )^{2}$, for
$j\geq 0$, by blocks $\{(j,k),\ k\in \mathbb{Z}\}$ for $j\in \{-1,
\dots ,J_{n} \}$ with a penalty $w(j)$ for each block to avoid choosing
too large $j$s, 2) to cut the blocks that do not sufficiently contribute
to the sum, in order to obtain statistical adaptation.%

Notice that $\widehat{V}_{\ell }$ can be seen as an estimator of
$V_{\ell }$ resulting from a model selection on the choice of the blocks
$\{(j,k),\ k\in \mathbb{Z}\}$, $j\in \{-1,\dots , J_{n}\}$ that are
kept, with the penalty function
%
%e11 #&#
\begin{equation}
\label{def:penalty}
\mbox{pen}(\mathcal{J})=\sum _{j\in \mathcal{J}} w(j),
\end{equation}
for $\mathcal{J}\subset \{-1,\dots , J_{n}\}$. Indeed:
%
%e12 #&#
\begin{align}
\widehat{V}_{\ell }=
& \sup _{\mathcal{J}\subset \{-1,0,\dots ,J_{n}\}}
\sum _{j\in \mathcal{J}} \Big [ \sum _{k\in \mathbb{N}}\big (
\widehat{\beta }^{\ell }_{jk}\big )^{2} - w(j)\Big ]
\nonumber
\\
=
& \sup _{\mathcal{J}\subset \{-1,0,\dots ,J_{n}\}}
\sum _{j\in \mathcal{J}} \sum _{k\in \mathbb{N}}\big (\widehat{\beta }
^{\ell }_{jk}\big )^{2} - \mbox{pen}(\mathcal{J}).
\label{lien_pen}
\end{align}

Remark that the definition of the estimator and the penalization depend
on a constant $K$ through the definition of $w(j)$. The value of this
constant is chosen in order to obtain oracle inequalities. In practice,
this constant is hard to compute, and can be chosen by a slope heuristic
approach (see e.g. \cite{arlotmassart}).

%s2.2 #&#
\subsection{Statistical properties}

In this Section, we are interested in the rate of convergence to zero
of the mean square error (MSE) $\mathbb{E}\big ((S_{\ell }-\widehat{S}
_{\ell })^{2}\big )$.

%l2.2 #&#
\begin{lemma}
\label{lemme1}%
Consider the generic estimator $\widehat{S}_{\ell }$ defined in
\eqref{def:generique}, where $\widehat{V}_{\ell }$ is any estimator of
$V_{\ell }=\mathbb{E}(\mathbb{E}^{2}(Y\ |\ X_{\ell }))$. Then there is
a constant $C$ and an integer $n_{0}$ such that for all $n\geq n_{0}$,
%
%e13 #&#
\begin{equation}
\mathbb{E}\big ((S_{\ell }-\widehat{S}_{\ell })^{2}\big ) \leq
\frac{C}{n}+\frac{4}{\mbox{Var}(Y)^{2}} \mathbb{E}\Big [\big (
\widehat{V}_{\ell }- V_{\ell }\big )^{2}\Big ].
\label{erreur:generique}
\end{equation}
\end{lemma}

\begin{proof}
From \eqref{def:S_ell} and \eqref{def:generique},
%
%e14 #&#
\begin{align}
\mathbb{E}\big ((S_{\ell }-\widehat{S}_{\ell })^{2}\big )=
& \mathbb{E}
\Big [\Big (\frac{V_{\ell }-\mathbb{E}(Y)^{2}}{\mbox{Var}(Y)}-\frac{
\widehat{V}_{\ell }-\bar{Y}^{2}}{\widehat{\sigma }_{Y}^{2}}\Big )^{2}
\Big ]
\nonumber
\\
\leq
& 2 \mathbb{E}\Big [\Big (\frac{\mathbb{E}(Y)^{2}}{\mbox{Var}(Y)}-\frac{
\bar{Y}^{2}}{\widehat{\sigma }_{Y}^{2}}\Big )^{2}\Big ]+2\mathbb{E}
\Big [\Big (\frac{V_{\ell }}{\mbox{Var}(Y)}-\frac{\widehat{V}_{\ell }}{
\widehat{\sigma }_{Y}^{2}}\Big )^{2}\Big ].
\label{etape1}
\end{align}
The first term in the right hand side (r.h.s.) is in $C/n$ for
sufficiently large $n$. For the second term in the right hand side of
\eqref{etape1}:
%
%e15 #&#
\begin{equation}
\mathbb{E}\Big [\Big (\frac{V_{\ell }}{\mbox{Var}(Y)}-\frac{\widehat{V}
_{\ell }}{\widehat{\sigma }_{Y}^{2}}\Big )^{2}\Big ]
\leq 2 \mathbb{E}\Big [\widehat{V}_{\ell }^{2} \Big (\frac{1}{
\mbox{Var}(Y)}-\frac{1}{\widehat{\sigma }_{Y}^{2}}\Big )^{2}\Big ]+\frac{2}{
\mbox{Var}(Y)^{2}} \mathbb{E}\Big [\big (\widehat{V}_{\ell }- V_{\ell }
\big )^{2}\Big ].
\label{etape2}
\end{equation}
The first term in the r.h.s. is also in $C/n$, which concludes the
proof.
\end{proof}

When $\widehat{V}_{\ell }$ is a Nadaraya-Watson estimator, Loubes et al.
\cite{loubesmarteausolis} established from Lemma \ref{lemme1} a
control of the MSE that looks like the result we announce and comment
in Corollary \ref{corol:vitesseconvergence}. Their result is based on
\eqref{erreur:generique} and a bound for $\mathbb{E}\Big [\big (
\widehat{V}_{\ell }- V_{\ell }\big )^{2}\Big ]$ given by
\cite[Th. 1]{loubesmarteausolis}, whose proof is technical. Here, we
consider the estimator $\widehat{V}_{\ell }$ introduced in
\eqref{def:hat_theta} and upper-bound the MSE. Our proof is much shorter
than theirs, due to the nature of the estimators and to the techniques
that we use.%

Let us introduce first some additional notation. For $\mathcal{J}
\subset \{-1,\dots ,J_{n}\}$, we define the projection $h_{\mathcal{J,
\ell }}$ of $h$ on the subspace spanned by $\{\psi _{jk},\mbox{ with }
j\in \mathcal{J},\, k\in \mathbb{Z}\}$ and its estimator $\widehat{h}
_{\mathcal{J},\ell }$:
%
%e16 #&#
%e17 #&#
\begin{align}
& h_{\mathcal{J},\ell }(u)=\sum _{j\in \mathcal{J}} \sum _{k\in
\mathbb{Z}} \beta ^{\ell }_{jk}\psi _{jk}(u)
\\
& \widehat{h}_{\mathcal{J,\ell }}(u)=\sum _{j\in \mathcal{J}}
\sum _{k\in \mathbb{Z}} \widehat{\beta }^{\ell }_{jk}\psi _{jk}(u).
\end{align}
We also introduce the estimator of $V_{\ell }$ for a fixed subset of
resolutions $\mathcal{J}$:
%
%e18 #&#
\begin{align}
\widehat{V}_{\mathcal{J},\ell }= \| \widehat{h}_{\mathcal{J},\ell }\|
^{2}_{2}=\sum _{j\in \mathcal{J}} \sum _{k\in \mathbb{Z}}\big (
\widehat{\beta }^{\ell }_{jk}\big )^{2}.
\label{def:est_interm2}
\end{align}
Note that $\widehat{V}_{\mathcal{J},\ell }$ is one possible estimator
$\widehat{V}_{\ell }$ in Lemma \ref{lemme1}.%

The estimators $\widehat{\beta }_{jk}$ and $\widehat{V}_{\mathcal{J},
\ell }$ have natural expressions in term of the empirical process
$\gamma _{n}(dx)$ defined as follows:
%
%d2.3 #&#
\begin{definition}
\label{def:gamma_n}%
The empirical measure associated with our problem is:
%
%e19 #&#
\begin{equation}
\gamma _{n}(dx)=\frac{1}{n}\sum _{i=1}^{n} Y_{i}
\delta _{G_{\ell }(X_{\ell }^{i})}(dx)
\end{equation}
where $\delta _{a}(dx)$ denotes the Dirac mass in $a$.%

For a measurable function $f$, $ \gamma _{n}(f)=\frac{1}{n}\sum _{i=1}
^{n} Y_{i} f\big (G_{\ell }(X_{\ell }^{i})\big ) $. We also define the
centered integral of $f$ with respect to $\gamma _{n}(dx)$ as:
%
%e20 #&#
\begin{align}
\bar{\gamma }_{n}(f)=
& \gamma _{n}(f)-\mathbb{E}\big (\gamma _{n}(f)
\big )
\label{def:gamma_bar}%
\\
=
& \frac{1}{n}\sum _{i=1}^{n} \Big (Y_{i} f\big (G_{\ell }(X_{\ell }
^{i})\big )-\mathbb{E}\big [Y_{i}f\big (G_{\ell }(X_{\ell }^{i})\big )
\big ]\Big ).
\nonumber
\end{align}
\end{definition}

Using the empirical measure $\gamma _{n}(dx)$, we have:
\begin{align*}
\widehat{\beta }^{\ell }_{jk}=\gamma _{n}\big (\psi _{jk}\big )=\beta ^{
\ell }_{jk}+\bar{\gamma }_{n}\big (\psi _{jk}\big ).
\end{align*}
Let us also introduce the correction term using \eqref{eq:7}, \eqref{eq:8} and \eqref{def:gamma_bar}:
%
%e21 #&#
%e22 #&#
\begin{align}
\zeta _{n}=
& 2 \bar{\gamma }_{n}\big (h_{\ell }\big )
\label{def:zetan}%
\\
=
& 2 \Big [\frac{1}{n}\sum _{i=1}^{n} Y_{i} h_{\ell }\big (G_{\ell }(X
_{\ell }^{i})\big ) - \mathbb{E}\Big (Y_{1} h_{\ell }\big (G_{\ell }(X
_{\ell }^{1})\big )\Big )\Big ]
\nonumber
\\
=
& 2 \Big [\frac{1}{n}\sum _{i=1}^{n} h^{2}_{\ell }\big (G_{\ell }(X
_{\ell }^{i})\big )- \|h_{\ell }\|_{2}^{2} \Big ]+\frac{2}{n} \sum _{i=1}
^{n} \eta _{\ell }^{i} h_{\ell }\big (G_{\ell }(X_{\ell }^{i})\big ).
\label{etape:zeta}
\end{align}

%t2.4 #&#
\begin{theorem}
\label{th1}
Let us assume that the random variables $Y$ are bounded by a constant
$M>0$, and let us choose a father and a mother wavelets $\psi _{-10}$ and
$\psi _{00}$ that are continuous with compact support (and thus bounded).
The estimator $\widehat{V}_{\ell }$ defined in \eqref{def:hat_theta} is
almost surely finite, and:
%
%e23 #&#
\begin{equation}
\mathbb{E}\Big [\big (\widehat{V}_{\ell }-V_{\ell }- \zeta _{n} \big )^{2}
\Big ]
\leq C\ \inf _{\mathcal{J}\subset \{-1,\dots ,J_{n}\}} \Big (\|h_{
\ell }-h_{\mathcal{J},\ell }\|^{4}_{2} + \frac{\mbox{Card}^{2}(
\mathcal{J})}{n^{2}}\Big )+\frac{C'\log _{2}^{2}(n)}{n^{3/2}},
\label{oracle}
\end{equation}
for constants $C$ and $C'>0$.
\end{theorem}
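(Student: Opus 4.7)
The plan is to establish an oracle inequality via a penalized model selection argument in the spirit of Laurent and Massart \cite{laurentmassart}, substituting the Gaussian $\chi^2$-tail bounds by the empirical process concentration inequality of Castellan \cite{castellan} since $Y$ is only assumed bounded (not Gaussian).

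\textbf{Pointwise decomposition.} For fixed $\mathcal{J}_0\subset\{-1,\dots,J_n\}$, I would expand $\widehat{\theta}_{\mathcal{J}_0,\ell}$ around $V_\ell=\|h_\ell\|_2^2$. Writing $\widehat{\beta}^\ell_{jk}=\beta^\ell_{jk}+\bar{\gamma}_n(\psi_{jk})$, squaring, and using both the Pythagorean identity $\|h_\ell\|_2^2=\|h_{\mathcal{J}_0,\ell}\|_2^2+\|h_\ell-h_{\mathcal{J}_0,\ell}\|_2^2$ and the linearity $\sum_{j\in\mathcal{J}_0}\sum_k\beta^\ell_{jk}\bar{\gamma}_n(\psi_{jk})=\bar{\gamma}_n(h_{\mathcal{J}_0,\ell})$ gives
\begin{equation*}
\widehat{\theta}_{\mathcal{J}_0,\ell}-V_\ell-\zeta_n = -\|h_\ell-h_{\mathcal{J}_0,\ell}\|_2^2 + 2\bar{\gamma}_n(h_{\mathcal{J}_0,\ell}-h_\ell) + \chi_n(\mathcal{J}_0),
\end{equation*}
with $\chi_n(\mathcal{J}_0):=\sum_{j\in\mathcal{J}_0}\sum_k\bar{\gamma}_n(\psi_{jk})^2$ a chi-square-type quadratic form. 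The role of the subtracted correction $\zeta_n=2\bar{\gamma}_n(h_\ell)$ is precisely to kill the slow $O(1/\sqrt n)$ linear noise along the whole function $h_\ell$, leaving only the residual linear term along $h_{\mathcal{J}_0,\ell}-h_\ell$ whose variance is of order $\|h_\ell-h_{\mathcal{J}_0,\ell}\|_2^2/n$.

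\textbf{Model selection bracket.} The identity \eqref{lien_pen} gives $\widehat{\theta}_\ell=\sup_{\mathcal{J}}\{\widehat{\theta}_{\mathcal{J},\ell}-\pen(\mathcal{J})\}$, so after plugging in the decomposition,
\begin{equation*}
\widehat{\theta}_\ell-V_\ell-\zeta_n = \sup_\mathcal{J}\Big\{-\|h_\ell-h_{\mathcal{J},\ell}\|_2^2 + 2\bar{\gamma}_n(h_{\mathcal{J},\ell}-h_\ell) + \chi_n(\mathcal{J}) - \pen(\mathcal{J})\Big\}.
\end{equation*}
For the lower tail I would evaluate this supremum at any oracle $\mathcal{J}_0$ and use $\chi_n(\mathcal{J}_0)\ge 0$, producing the bound $-\|h_\ell-h_{\mathcal{J}_0,\ell}\|_2^2+2\bar{\gamma}_n(h_{\mathcal{J}_0,\ell}-h_\ell)-\pen(\mathcal{J}_0)$. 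For the upper tail I would bound the linear piece uniformly in $\mathcal{J}$ via $2|\bar{\gamma}_n(h_\ell-h_{\mathcal{J},\ell})|\le\|h_\ell-h_{\mathcal{J},\ell}\|_2^2+\bar{\gamma}_n(h_\ell-h_{\mathcal{J},\ell})^2/\|h_\ell-h_{\mathcal{J},\ell}\|_2^2$ (absorbing the first piece against the bias and the second against $\chi_n(\mathcal{J})$ plus the linear residual at the fixed oracle $\mathcal{J}_0$).

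\textbf{Uniform concentration of $\chi_n(\mathcal{J})-\pen(\mathcal{J})$.} This is the technical heart. Observing that $\chi_n(\mathcal{J})=\|\widehat{h}_{\mathcal{J},\ell}-h_{\mathcal{J},\ell}\|_2^2=\sup_{f\in S_\mathcal{J}}\bar{\gamma}_n(f)^2$, where $S_\mathcal{J}$ is the unit $L^2$-sphere of the block, I would apply Castellan's non-Gaussian Talagrand-type inequality, using that both $Y$ and the $\psi_{jk}\circ G_\ell$ are bounded (thanks to the compact-support assumption on the wavelets), to obtain
\begin{equation*}
\P\Big(\chi_n(\mathcal{J})\ge \tfrac{1}{2}\pen(\mathcal{J})+c\,\tfrac{2^{J_{\max}}+t}{n^2}\Big)\le e^{-x_\mathcal{J}-t}.
\end{equation*}
The calibration $w(j)=K'(2^j+\log 2)/n$ has been chosen so that $\pen(\mathcal{J})$ dominates twice the mean of $\chi_n(\mathcal{J})$ plus its Bernstein sub-Gaussian proxy. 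A union bound over $\mathcal{J}\subset\{-1,\dots,J_n\}$ with weights $e^{-x_\mathcal{J}}$, followed by integration of the tail, produces via the combinatorial estimate $\sum_\mathcal{J}e^{-x_\mathcal{J}}2^{2J_{\max}}\le Cn/\log^2 n$ recalled right after \eqref{choix:xJ}, an in-probability contribution of order $2^{J_{\max}}/n^2$ and an exceptional residual of order $1/(n\log^2 n)$.

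\textbf{Assembly and main obstacle.} Squaring the deviation bound, bounding the fixed-$\mathcal{J}_0$ linear term by $\E[\bar{\gamma}_n(h_\ell-h_{\mathcal{J}_0,\ell})^2]\le M^2\|h_\ell-h_{\mathcal{J}_0,\ell}\|_2^2/n$, and merging cross-terms via $M^2\|g\|^2/n\le\|g\|^4+M^4/n^2$, then taking the infimum over $\mathcal{J}_0$, yields \eqref{oracle}; almost sure finiteness of $\widehat{\theta}_\ell$ is immediate because \eqref{lien_pen} is a supremum over a finite collection. The expected main obstacle is the uniform concentration step: controlling simultaneously, over the exponentially many subsets $\mathcal{J}$, a nonlinear functional of the empirical process while tuning the calibration constants of $w(j)$ and the weights $x_\mathcal{J}$ so that $\pen(\mathcal{J})$ absorbs exactly $\E[\chi_n(\mathcal{J})]$ (leaving only the centered variance $2^{J_{\max}}/n^2$) and keeps the union bound summable into the $1/(n\log^2 n)$ residual.
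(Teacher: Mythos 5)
Your plan follows essentially the same route as the paper: the same decomposition of $\widehat{\theta}_{\mathcal{J},\ell}-V_\ell-\zeta_n$ into the bias $-\|h_\ell-h_{\mathcal{J},\ell}\|_2^2$, the residual linear term $2\bar{\gamma}_n(h_{\mathcal{J},\ell}-h_\ell)$ and the quadratic term $\|\widehat{h}_{\mathcal{J},\ell}-h_{\mathcal{J},\ell}\|_2^2$, the same split of the supremum into a summed upper tail and an oracle lower tail, the same Talagrand/Bernstein concentration with weights $e^{-x_\mathcal{J}}$, and the same assembly. The only device you leave implicit is the localization event $\Omega_\mathcal{J}(\rho)$ on which the $\ell^1$-norms of the rescaled coefficients are controlled; the paper needs it (together with a separate Bernstein bound on its complement) because a naive sup-norm bound over the unit $L^2$-sphere of the block makes the linear term in Talagrand's inequality too large to yield exactly $2^{J_{\mbox{{\scriptsize max}}}}/n^2$ after integration of the tail.
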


We deduce the following corollary from the estimate obtained above. Let
us consider the Besov space $\mathcal{B}(\alpha ,2,\infty )$ of
functions $h=\sum _{j\geq -1}\sum _{k\in \mathbb{Z}} \beta _{jk} \psi
_{jk}$ of $L^{2}$ such that
\begin{equation*}
|h|_{\alpha ,2,\infty }:= \sum _{j\geq 0} 2^{j\alpha }\sqrt{
\sup _{0<v\leq 2^{-j}} \int _{0}^{1-v}|h(u+v)-h(u)|^{2}du} <+\infty .
\end{equation*}

For a $h\in \mathcal{B}(\alpha ,2,\infty )$ and for its projection
$h_{\mathcal{J}}$ on $\mbox{Vect}\{\psi _{jk},\ j\in \mathcal{J}=\{-1,
\dots J_{\mbox{{\scriptsize max}}}\},\ k\in \mathbb{Z}\}$ (with
$J_{\mbox{{\scriptsize max}}}=\max \mathcal{J}$), we have the following
approximation result from \cite[Th. 9.4]{hardlekerkapicardtsyba}.
%
%p2.5 #&#
\begin{prop}[H\"{a}rdle Kerkyacharian Picard and Tsybakov]
\label{prop:besov}
Assume that the wavelet function $\psi _{-10}$ has compact support and
is of class $\mathcal{C}^{N}$ for an integer $N>0$. Then, if
$h\in \mathcal{B}(\alpha ,2,\infty )$ with $\alpha <N+1$,
%
%e24 #&#
\begin{equation}
\sup _{\mathcal{J}\subset \mathbb{N}\cup \{-1\}} 2^{\alpha J_{
\mbox{{\scriptsize max}}}} \|h-h_{\mathcal{J}}\|_{2}
= \sup _{\mathcal{J}\subset \mathbb{N}\cup \{-1\}} 2^{\alpha J_{
\mbox{{\scriptsize max}}}} \Big ( \sum _{j\geq J_{
\mbox{{\scriptsize max}}}} \sum _{k\in \mathbb{Z}} \beta _{jk}^{2}
\Big )^{1/2} <+\infty .
\end{equation}
\end{prop}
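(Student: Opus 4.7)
The plan is to reduce the proposition to the standard wavelet characterization of the Besov space $\mathcal{B}(\alpha,2,\infty)$. First, I would exploit orthonormality of the wavelet basis $(\psi_{jk})_{j\geq -1, k\in \Z}$ and apply Parseval's identity to the difference $h-h_\mathcal{J}$. Since $h_\mathcal{J}$ is exactly the orthogonal projection on $\Vect\{\psi_{jk},\ j\in \mathcal{J},\ k\in \Z\}$, this immediately gives
$$\|h-h_\mathcal{J}\|_2^2 = \sum_{j \notin \mathcal{J}} \sum_{k\in \Z} \beta_{jk}^2,$$
which, when $\mathcal{J}=\{-1,\dots,J_{\mbox{{\scriptsize max}}}\}$, yields the equality appearing in the proposition (up to the boundary convention on the index $j=J_{\mbox{{\scriptsize max}}}$, which does not affect the geometric decay).

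Next, I would invoke the wavelet characterization of Besov spaces: when the father wavelet is compactly supported and of class $\mathcal{C}^N$ with $\alpha<N+1$, one has the norm equivalence
$$|h|_{\alpha,2,\infty} \;\asymp\; \sup_{j\geq 0} 2^{j\alpha}\Big( \sum_{k\in \Z} \beta_{jk}^2\Big)^{1/2}.$$
This equivalence is the key non-trivial input. It rests on two ingredients: the vanishing-moment property of the mother wavelet (inherited from the smoothness and compact support of the father wavelet, which forces $\int u^r \psi_{00}(u)\,du=0$ for $r\leq N$), and a careful matching between the $L^2$-modulus of smoothness defining $|h|_{\alpha,2,\infty}$ and the block energies $\sum_k \beta_{jk}^2$. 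The condition $\alpha<N+1$ ensures the wavelet has enough regularity and vanishing moments to resolve Besov smoothness of order $\alpha$.

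From this equivalence, I would bound $\sum_{k} \beta_{jk}^2 \leq C\, |h|_{\alpha,2,\infty}^2\, 2^{-2j\alpha}$ at each level $j\geq 0$, and sum over $j>J_{\mbox{{\scriptsize max}}}$ via the geometric series $\sum_{j>J_{\mbox{{\scriptsize max}}}} 2^{-2j\alpha} \leq C'\, 2^{-2\alpha J_{\mbox{{\scriptsize max}}}}$. Multiplying by $2^{2\alpha J_{\mbox{{\scriptsize max}}}}$ and taking square roots yields
$$2^{\alpha J_{\mbox{{\scriptsize max}}}} \|h-h_\mathcal{J}\|_2 \leq C''\, |h|_{\alpha,2,\infty},$$
uniformly in $J_{\mbox{{\scriptsize max}}}$, which is the claim. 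The main obstacle is the Besov-wavelet equivalence itself; since it is an established classical result, in the present paper the proof is finished by directly referring to \cite[Th. 9.4]{hardlekerkapicardtsyba}, and no independent argument is required.
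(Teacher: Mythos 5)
Your proposal is correct and matches the paper's treatment: the paper gives no independent argument for this proposition but simply invokes Theorem 9.4 of H\"ardle, Kerkyacharian, Picard and Tsybakov (checking its hypotheses via the remark after their Corollary 8.2), which is exactly the classical Besov--wavelet equivalence you identify as the key input. Your additional steps (Parseval for the projection error and the geometric summation of the block energies $\sum_k \beta_{jk}^2 \leq C\,2^{-2j\alpha}$) are the standard unpacking of that cited result, so there is nothing genuinely different here.
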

Notice that Theorem 9.4 of \cite{hardlekerkapicardtsyba} requires
assumptions that are fulfilled when $\psi _{-10}$ has compact support and
is smooth enough (see the comment after the Corol. 8.2 of
\cite{hardlekerkapicardtsyba}).

%c2.6 #&#
\begin{corollary}
\label{corol:vitesseconvergence}%
If $\psi _{-10}$ has compact support and is of class $\mathcal{C}^{N}$
for an integer $N>0$ and if $h_{\ell }$ belongs to a ball of radius
$R>0$ of $\mathcal{B}(\alpha ,2,\infty )$ for $0<\alpha <N+1$, then
%
%e25 #&#
\begin{align}
\sup _{h\in \mathcal{B}(\alpha ,2,\infty )}\mathbb{E}\Big [\big (
\widehat{V}_{\ell }-V_{\ell }\big )^{2}\Big ]\leq
& C \Big (n^{-\frac{8
\alpha }{4\alpha +1}}+\frac{1}{n}\Big ).
\end{align}
As a consequence, we obtain the following elbow effect:%

If $\alpha \geq \frac{1}{4}$, there exists a constant $C>0$ such that
\begin{equation*}
\mathbb{E}\big ((S_{\ell }-\widehat{S}_{\ell })^{2}\big )\leq
\frac{C}{n}.
\end{equation*}
If $\alpha <\frac{1}{4}$, there exists a constant $C>0$ such that
\begin{equation*}
\mathbb{E}\big ((S_{\ell }-\widehat{S}_{\ell })^{2}\big )\leq C n^{-\frac{8
\alpha }{4\alpha +1}}.
\end{equation*}
\end{corollary}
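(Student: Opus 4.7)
The plan is to chain Theorem \ref{th1}, Proposition \ref{prop:besov} and Lemma \ref{lemme1}. A preparatory step is to pass from the bound on $\widehat\theta_\ell-V_\ell-\zeta_n$ in Theorem \ref{th1} to one on $\widehat\theta_\ell-V_\ell$. Using $(a+b)^2\le 2a^2+2b^2$, it suffices to control $\E[\zeta_n^2]$. Since $|Y|\le M$ by hypothesis, the regression function $g_\ell$ and hence $h_\ell=g_\ell\circ G_\ell^{-1}$ are bounded by $M$, so $\zeta_n$ is a centred empirical mean of i.i.d.\ bounded random variables and a direct variance computation yields
$$\E[\zeta_n^2]=\frac{4}{n}\,\Var\bigl(Y_1\, h_\ell(G_\ell(X_\ell^1))\bigr)\le \frac{4M^4}{n}.$$

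The second step is to make a good choice of $\mathcal{J}$ in the oracle bound of Theorem \ref{th1}. Taking $\mathcal{J}=\{-1,0,\dots,J_\star\}$ with $J_\star\le J_n$ to be optimized, Proposition \ref{prop:besov} gives, uniformly over the ball of radius $R$ of $\mathcal{B}(\alpha,2,\infty)$, the bias estimate $\|h_\ell-h_{\mathcal{J},\ell}\|_2\le CR\,2^{-\alpha J_\star}$. The oracle bound in Theorem \ref{th1} is then at most a constant multiple of $2^{-4\alpha J_\star}+2^{J_\star}/n^2+1/(n\log^2 n)$. Balancing the first two terms through $(4\alpha+1)J_\star=2\log_2 n$, i.e.\ $2^{J_\star}=n^{2/(4\alpha+1)}$, produces the common value $n^{-8\alpha/(4\alpha+1)}$. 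Combining with the $O(1/n)$ contribution of $\zeta_n$ yields
$$\E\bigl[(\widehat\theta_\ell-V_\ell)^2\bigr]\le C\Bigl(n^{-8\alpha/(4\alpha+1)}+\frac{1}{n}\Bigr)$$
uniformly on the Besov ball, which is the first inequality of the corollary. Plugging this into Lemma \ref{lemme1} with $\widehat V_\ell=\widehat\theta_\ell$ delivers the same rate for $\E[(S_\ell-\widehat S_\ell)^2]$, the extra $C/n$ contributed by the lemma being absorbed by the one already present.

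The elbow is then an elementary algebraic observation: the exponent satisfies $\frac{8\alpha}{4\alpha+1}\ge 1$ if and only if $\alpha\ge 1/4$, so $n^{-8\alpha/(4\alpha+1)}$ is dominated by $1/n$ above the threshold and dominates below it, giving the two regimes stated. The main obstacle I foresee is verifying that the putative optimal level $J_\star=2\log_2 n/(4\alpha+1)$ actually lies inside the admissible range $\{-1,\dots,J_n\}$, in which $J_n\sim \tfrac12\log_2 n$: for small regularities $\alpha$ one is forced to truncate $J_\star$ down to $J_n$ and re-inspect the approximation-versus-variance trade-off, which requires some care but does not affect the position of the elbow itself.
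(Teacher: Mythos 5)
Your argument is the same as the paper's: bound $\E[\zeta_n^2]$ by $C/n$ via a direct variance computation, choose $\mathcal{J}=\{-1,\dots,J_\star\}$ with $2^{J_\star}=n^{2/(4\alpha+1)}$ in the oracle bound of Theorem \ref{th1} using Proposition \ref{prop:besov}, and feed the result into Lemma \ref{lemme1} before the elementary comparison of exponents at $\alpha=1/4$. The only place you go beyond the paper is the admissibility issue you flag at the end: the paper's proof silently takes $J_{\mbox{\scriptsize max}}=\frac{2}{4\alpha+1}\log_2 n$ even though this exceeds $J_n\approx\frac12\log_2 n$ whenever $\alpha<3/4$, and truncating at $J_n$ actually leaves a bias term of order $n^{-2\alpha}\log^{4\alpha}n$ rather than $n^{-8\alpha/(4\alpha+1)}$, so your caveat that this step ``requires some care'' is well placed and is, if anything, understated relative to what the paper itself provides.
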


The proof of Theorem \ref{th1} is postponed to Section
\ref{section:proof_th}. Let us remark that in comparison with the result
of Loubes et al. \cite{loubesmarteausolis}, the regularity
assumption is on the function $h_{\ell }$ rather than on the joint
density $\phi (x,y)$ of $(X_{\ell }, Y)$. The adaptivity of our
estimator is then welcomed since the function $h_{\ell }$ is \textit{a
priori} unknown. Remark that in application, the joint density
$\phi (x,y)$ also has to be estimated and hence has an unknown
regularity.

When $\alpha <1/4$ and $\alpha \rightarrow 1/4$, the exponent
$8\alpha /(4\alpha +1)\rightarrow 1$. In the case when $\alpha > 1/4$,
we can show from the estimate of Th. \ref{th1} that:
%
%e26 #&#
\begin{equation}
\lim _{n\rightarrow +\infty } n \mathbb{E}\Big [\big (\widehat{V}_{
\ell }-V_{\ell }- \zeta _{n} \big )^{2}\Big ]=0,
\end{equation}
which yields that $\sqrt{n}\big (\widehat{V}_{\ell }-V_{\ell }- \zeta
_{n} \big )$ converges to 0 in $L^{2}$. Since $\sqrt{n}\zeta _{n}$
converges in distribution to $\mathcal{N}\Big (0,4 \mbox{Var}\big (Y
_{1} h_{\ell }(G_{\ell }(X^{1}_{\ell }))\big )\Big )$ by the central limit
theorem, we obtain that:
%
%e27 #&#
\begin{equation}
\lim _{n\rightarrow +\infty }\sqrt{n}\big (\widehat{V}_{\ell }-V_{
\ell }\big )=\mathcal{N}\Big (0,4 \mbox{Var}\big (Y_{1} h_{\ell }(G_{
\ell }(X^{1}_{\ell }))\big )\Big ),
\end{equation}
in distribution.

The result of Corollary \ref{corol:vitesseconvergence} is stated for
functions $h_{\ell }$ belonging to $\mathcal{B}(\alpha ,2,\infty )$, but
the generalization to other Besov spaces might be possible.

%s2.3 #&#
\subsection{Numerical tests on toy models}%
\label{section:ishigami}

We start with considering toy models based on the Ishigami function,
often chosen as benchmark:
%
%e28 #&#
\begin{equation}
Y=f(X_{1}, X_{2}, X_{3})=\sin (X_{1})+7\sin (X_{2})^{2}+0.1\ X_{3}
^{4} \sin (X_{1})
\end{equation}
where $X_{i}$ are independent uniform random variables in $[-\pi ,
\pi ]$ (see e.g. \cite{ishigamihomma,saltellilivre2000}).%

\medskip\noindent
\textit{Case 1 -- Ishigami model}: first, we consider this model with
$(X_{1}, X_{2}, X_{3})$ as input parameters and compute the associated
Sobol indices. For the Ishigami function, all the Sobol sensitivity
indices are known.
\begin{equation*}
S_{1}=0.3139,
\qquad
S_{2}=0.4424,
\qquad
S_{3}=0.
\end{equation*}
\textit{Case 2 -- stochastic Ishigami model}: following Marrel et al.
\cite{marrelioossdaveigaribatet}, we consider the case where
$(X_{1},X_{2})$ are the input parameters and $X_{3}$ a nuisance random
parameter. The Sobol indices relative to $X_{1}$ and $X_{2}$ have the
same values as in the first case.%

In each case, we compare the estimator of the Sobol indices of order 1
based on the wavelet regressions with two other estimators:
\begin{itemize}%
\item
the Jansen estimator, which is one of the classical estimator found in
the literature (see
\cite{jansen,saltelliannoniazzinicampolongorattotarantola} for Jansen
and other estimators). This estimator is based on the mixing of two
samples $(X^{(1),i}_{1},...,X^{(1),i}_{p},\ i\in \{1,\dots n\})$ and
$(X^{(2),i}_{1},...,X^{(2),i}_{p},\ i\in \{1,\dots n\})$ of i.i.d.
$p$-uplets distributed as $(X_{1},\dots X_{p})$: for the first order
Sobol indices, $ \forall \ell \in {1,...,p}$:
%
%e29 #&#
\begin{multline}
\widehat{S}_{\ell }= 1 - \frac{1}{2n \ \widehat{\sigma }_{Y}^{2}}
\sum _{i=1}^{n} \big (f(X^{(2),i}_{1},...,X^{(2),i}_{p})
\\
- f(X^{(1),i}_{1},...,X^{(1),i}_{\ell -1}, X^{(2),i}_{\ell },X^{(1),i}
_{\ell +1},\dots , X^{(1),i}_{p})\big )^{2}.
\label{estimator:jansen}
\end{multline}%
\item
the estimator \eqref{def:generique} defined with the choice of the
Nadaraya-Watson regression estimator for $\widehat{V}_{\ell }$ (e.g.
\cite{tsybakov_livre}) instead of the wavelet estimator
\eqref{def:hat_theta}:
\begin{equation*}
\widehat{V}_{\ell }=\frac{\sum _{j=1}^{n} Y_{j} K_{h}(X_{\ell }^{j}-x)}{
\sum _{j=1}^{n} K_{h}(X_{\ell }^{j}-x)}.
\end{equation*}
This provides the estimator:
%
%e30 #&#
\begin{equation}
\widehat{S}_{\ell }=\frac{\frac{1}{n}\sum _{i=1}^{n} \Big (\frac{\sum
_{j =1}^{n} Y_{j} K_{h}(X_{\ell }^{j}-X_{\ell }^{i})}{\sum _{j =1}^{n}
K_{h}(X_{\ell }^{j}-X_{\ell }^{i})} \Big )^{2}-\bar{Y}^{2}}{
\widehat{\sigma }_{Y}^{2}}.
\end{equation}
\end{itemize}
Notice that the estimations using Jansen estimators require
$(p+1)n$ calls to $f$, which is in many real cases the most expensive
numerically. To enable comparisons, we compute the non-parametric
estimators of the $S_{\ell }$'s from samples of size $(p+1)n$. We used
$n=10{,}000$. To obtain Monte-Carlo approximations of the estimators'
distributions, we performed 1,000 replications from which we estimate
the bias and MSE for each estimator. For the wavelet (resp.
Nadaraya-Watson) estimator, we choose the constant $K$ (resp. window
$h$) by a leave-one-out cross validation procedure
\cite[Section 1.4]{tsybakov_livre}. For the wavelet estimator, we use
the Daubechies 4 wavelet basis when implementing the wavelet estimator.

%p2.3 #&#
\paragraph{Case 1}
The results are presented in Table \ref{tb:ishidet}. When comparing the
MSE, the performances of the Jansen estimators are overall lower than
the non-parametric estimators, but the bias is usually smaller. For
$X_{1}$ and $X_{2}$, the Nadaraya-Watson and wavelet estimators have
comparable performances, but for $X_{3}$ the Nadaraya-Watson estimator
performs better. This is due to the fact that the window $h$ for this
variable can be chosen large since the function to estimate is flat (see
Figure \ref{fig:exampleIshi40000}).

%t1 #&#
\begin{table}[!ht]
\caption{Estimates of the bias and MSE for the parameters $X_{1}$, $X_{2}$ and $X_{3}$ in the Ishigmami function, for 1,000 replications and $n=10{,}000$}
\label{tb:ishidet}
\centering
\scalebox{0.8}{
\begin{tabular}{| l c c c c c c |}
\hline
Method & $\mathbb{E}[\hat{S_{1}}-S_{1}]$ & $\mathbb{E}[(\hat{S_{1}}-S_{1})^{2}]$ & $\mathbb{E}[\hat{S_{2}}-S_{2}]$ & $\mathbb{E}[(\hat{S_{2}}-S_{2})^{2}$ & $\mathbb{E}[\hat{S_{3}}-S_{3}]$ & $\mathbb{E}[(\hat{S_{3}}-S_{3})^{2}]$ \\
\hline
Jansen & 9,90E-04 & 1,80E-04 & 3,20E-05 & 1,00E-04 & 8,60E-04 & 5,60E-04 \\
Nadaraya-Watson & $-$1,00E-03 & 1,30E-05 & $-$9,90E-03 & 1,10E-04 & $-$1,20E-05& 1,10E-07\\
Wavelets & $-$1,10E-03 & 4,00E-05 & 1,30E-03 & 6,60E-05 & 3,90E-03 & 2,20E-05\\
\hline
\end{tabular}
}\end{table}

%f1 #&#
\begin{figure}
\centering
\includegraphics[width=7cm]{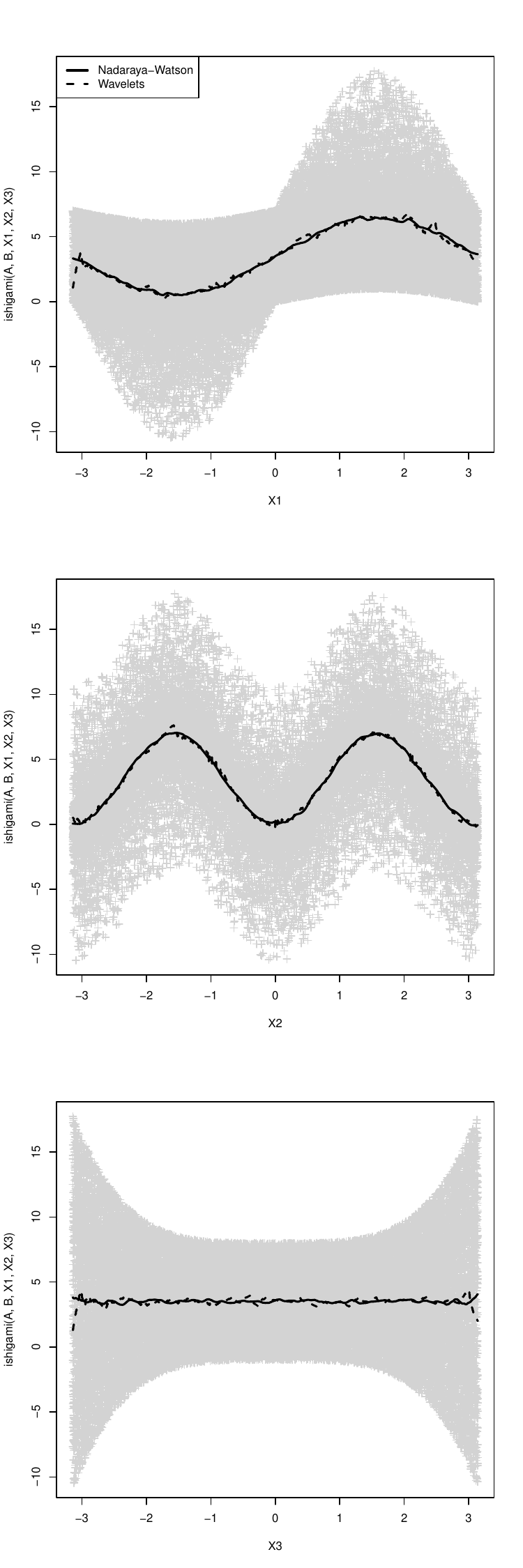}
\caption{{\small \textit{Example of regression obtained using Nadaraya-Watson and wavelets with $n(p+1)=40,000$ simulations for the Ishigami function. The conditional expectation of $Y$ knowing $X_1$ (resp. $X_2$ and $X_3$) is represented in line 1 (resp. 2 and 3).}}}\label{fig:exampleIshi40000}
\end{figure}

%p2.3 #&#
\paragraph{Case 2}
The results are presented in Table \ref{tb:ishistoch}. As for Case 1,
we see that in term of MSE, the non-parametric estimators overperform
again the Jansen estimators. For $X_{1}$, the Nadaraya-Watson and
wavelet estimators have comparable statistics, but the wavelet estimator
is the best for $X_{2}$.

%t2 #&#
\begin{table}[!ht]
\caption{Estimates of the bias and MSE for the parameters $X_{1}$ and
$X_{2}$ in the Ishigmami function, when $X_{3}$ is considered as a
pertubation parameter, for 1,000 replications and $n=10{,}000$}
\label{tb:ishistoch}
\centering
\begin{tabular}{| l c c c c|}
\hline
Method & $\mathbb{E}[\hat{S_{1}}-S_{1}]$ & $\mathbb{E}[(\hat{S_{1}}-S_{1})^{2}]$ & $\mathbb{E}[\hat{S_{2}}-S_{2}]$ & $\mathbb{E}[(\hat{S_{2}}-S_{2})^{2}$ \\
\hline
Jansen & $-$5,60E-04 & 2,00E-04 & $-$7,80E-04 & 1,80E-04\\
Nadaraya-Watson & $-$1,80E-03 & 1,70E-05 & $-$1,40E-02 & 2,00E-04\\
Wavelets & $-$7,00E-04& 5,60E-05 & 1,90E-03 & 8,70E-05\\
\hline
\end{tabular}%
\end{table}

Both non-parametric estimators depend on a tuning parameter: the window
$h$ for Nadaraya-Watson and the constant $K$ for the wavelets. In Figure
\ref{fig:HKvarie}, the MSE are plotted as functions of the window
$h$ (for the estimator with Nadaraya-Watson) and of the constant $K$
(for our estimator with wavelets). The performances of the wavelet
estimator are much more stable with respect to the values of $K$ on the
stochastic Ishigami model (Case 2).

%f2 #&#
\begin{figure}[!t]
\centering
\begin{tabular}{cc}
\includegraphics[width=7cm]{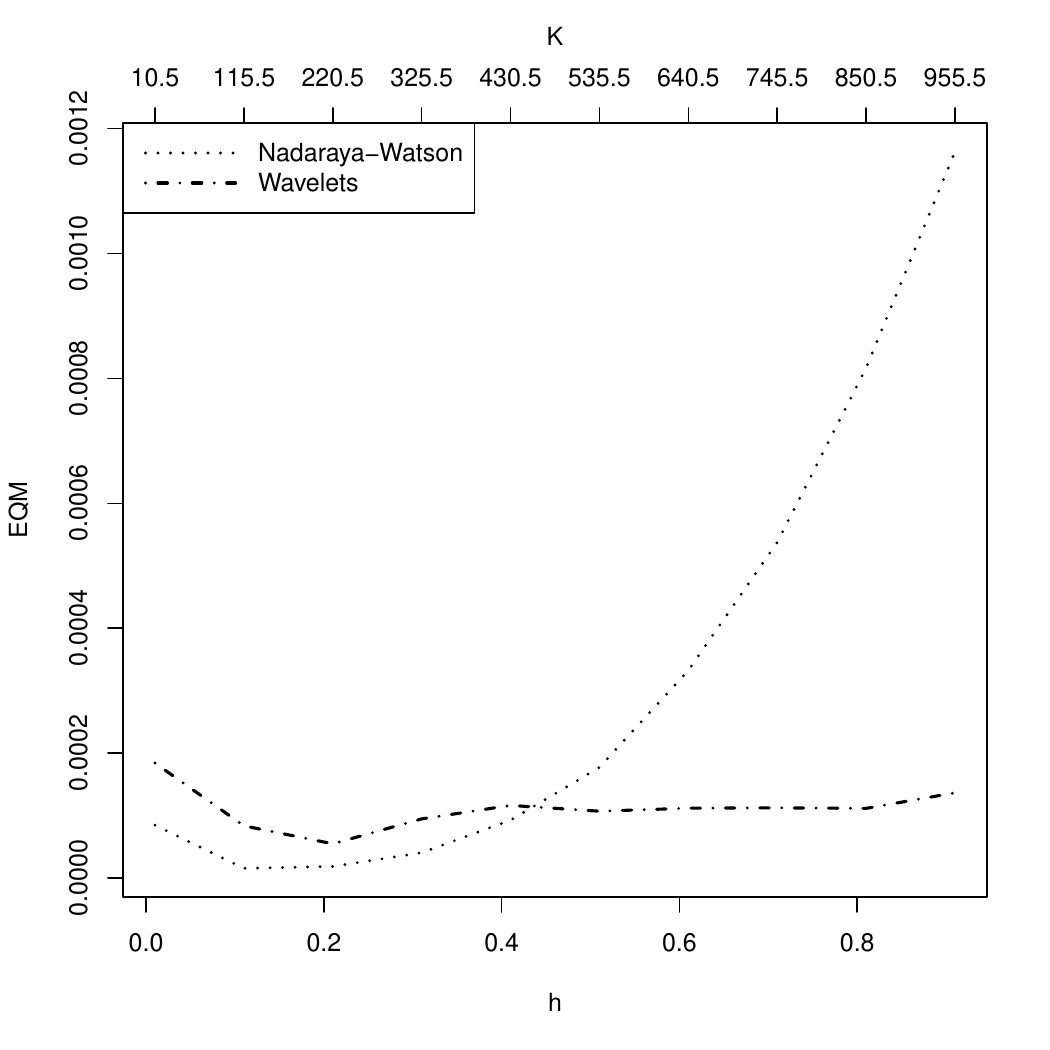} &
\includegraphics[width=7cm]{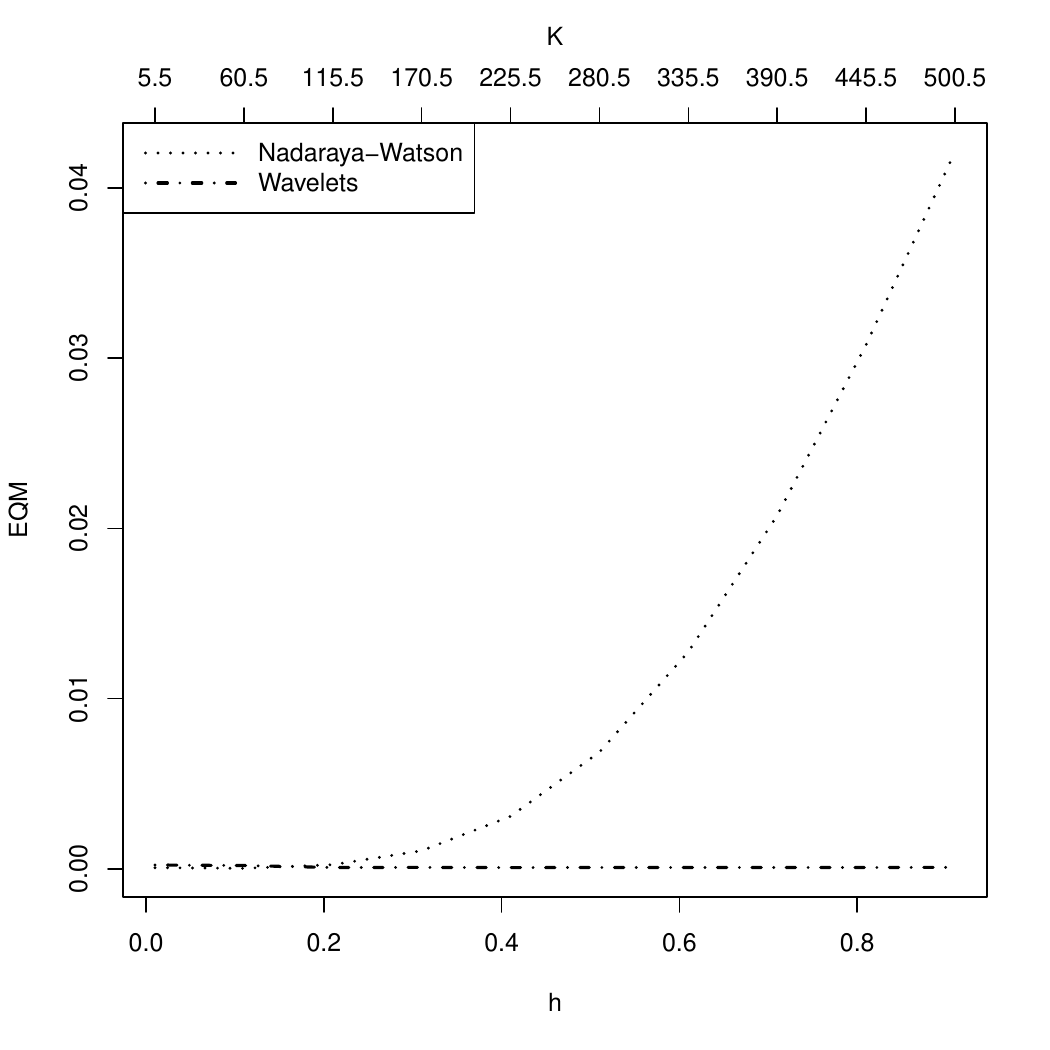}
\end{tabular}
\caption{{\small \textit{MSE for the estimators using Nadaraya-Watson (dots) and wavelets (dash-dots) in the case of the stochastic Ishigami model (Case 2) for $n=10,000$: for $X_1$ (left) and $X_2$ (right). The MSE with Nadaraya-Watson estimators are plotted as functions of the window $h$ (bottom axis) and the MSE with wavelets are plotted as function of the constant $K$ (top axis).  }}}\label{fig:HKvarie}
\end{figure}

To conclude these simulations on the stochastic Ishigami model, we
plotted on logarithmic scales the MSE as function of the sample size
$n$: see Figure \ref{fig:nvarie}. It is seen that the wavelet estimator
is better than the Jansen estimator. For the wavelet estimator, the
slope estimated with ordinary least squares equals to $-1.15$ for
$X_{1}$ and $-1.12$ for $X_{2}$. This is in accordance with the value
of $-$1 predicted by Corollary \ref{corol:vitesseconvergence}.

%f3 #&#
\begin{figure}[!t]
\centering
\includegraphics[height=7cm]{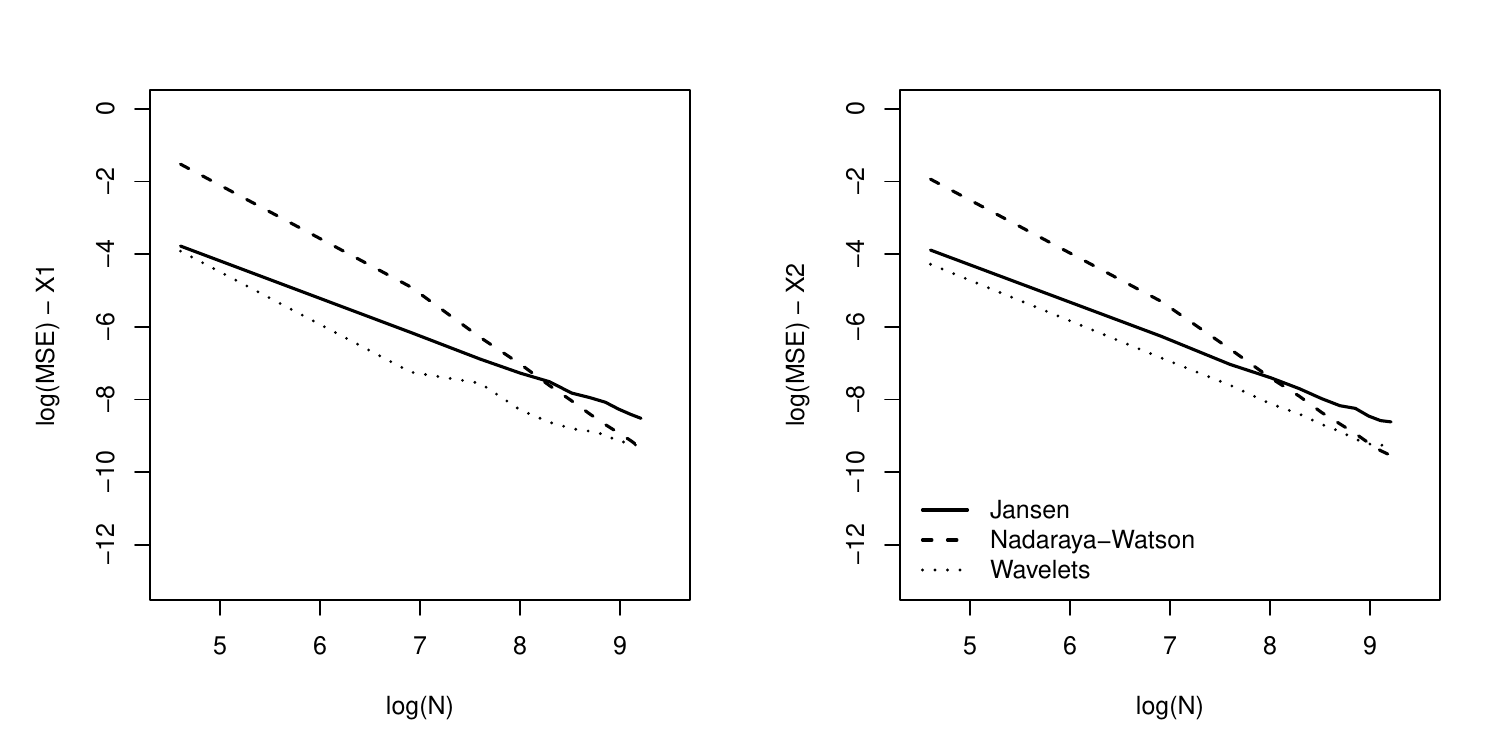}
\caption{{\small \textit{MSE for the Jansen estimators (plain), non-parametric estimator with Nadaraya-Watson (dots) and estimator with wavelets (dash-dots) in the case of the stochastic Ishigami model (Case 2) are represented as functions of $n$, which varies between $100$ and $10,000$. Left: for $X_1$. Right: for $X_2$. The graphs are plotted in logarithmic scales. }}}\label{fig:nvarie}
\end{figure}

These results suggest that the proposed non-parametric estimator
constitute an interesting alternative to the Jansen estimator, showing
less variability and potentially requiring a lower number of simulations
of the model, even in the deterministic setting of Case 1.

%s3 #&#
\section{Sobol indices for epidemiological problems}%
\label{section:epidemio}

We now consider two stochastic individual-based models of epidemiology
in continuous time. In both cases, the population is of size $N$ and
divided into compartments. Input parameters are the rates describing the
times that individuals stay in each compartment. These rates are usually
estimated from epidemiological studies or clinical trials, but there can
be uncertainty on their values due to various reasons. The restricted
size of the sample in these studies brings uncertainty on the estimates,
which are given with uncertainty intervals (classically, a 95\%
confidence interval). Different studies can provide different estimates
for the same parameters. The study populations can be subject to
selection biases. In the case of clinical trials where the efficacy of
a treatment is estimated, the estimates can be optimistic compared with
what will be the effectiveness in real-life, due to the protocol of the
trials. It is important to quantify how these uncertainties on the input
parameters can impact the results and the conclusion of an
epidemiological modelling study.

%s3.1 #&#
\subsection{SIR model and ODE metamodels}

In the first model, we consider the usual SIR model, with three
compartments: susceptibles, infectious and removed (e.g.
\cite{andersonbritton,bookCIMPA,diekmannheesterbeekbritton}). We denote
by $S_{t}^{N}$, $I_{t}^{N}$ and $R_{t}^{N}$ the respective sizes of the
corresponding sub-populations at time $t\geq 0$, with $S_{t}^{N}+I
_{t}^{N}+R_{t}^{N}=N$. At the population level, infections occur at the
rate $\frac{\lambda }{N} S_{t}^{N} I^{N}_{t}$ and removals at the rate
$\mu I_{t}^{N}$. The idea is that to each pair of susceptible-infectious
individuals a random independent clock with parameter $\lambda /N$ is
attached and to each infectious individual an independent clock with
parameter $\mu $ is attached.%

The input parameters are the rates $\lambda $ and $\mu $. The outpout
parameter is the final size of the epidemic, i.e. at a time $T>0$ where
$I^{N}_{T}=0$, $Y=(I^{N}_{T}+R^{N}_{T})/N$.

It is possible to describe the evolution of $(S^{N}_{t}/N,I^{N}_{t}/N,R
^{N}_{t}/N)_{t\geq 0}$ by a stochastic differential equation (SDE)
driven by Poisson point measures (see e.g. \cite{chihdr}) and it is
known that when $N\rightarrow +\infty $, this stochastic process
converges in $\mathbb{D}(\mathbb{R}_{+},\mathbb{R}^{3})$ to the unique
solution $(s_{t},i_{t},r_{t})_{t\geq 0}$ of the following system of
ordinary differential equations (e.g.
\cite{andersonbritton,bookCIMPA,diekmannheesterbeekbritton,chihdr}):
%
%e31 #&#
\begin{equation}
\label{metamodele}%
\left \{
\begin{array}{l}
\frac{ds}{dt} = - \lambda s_{t} i_{t}
\\[3pt]
\frac{di}{dt} = \lambda s_{t} i_{t} - \mu i_{t}
\\[3pt]
\frac{dr}{dt} = \mu i_{t}.
\end{array}
\right .
\end{equation}
The fluctuations associated with this convergence have also been
established. The limiting equations provide a natural deterministic
approximating meta-model (recall \cite{marrelioossdaveigaribatet})
for which sensitivity indices can be computed.

For the numerical experiment, we consider a close population of 1200
individuals, starting with $S_{0}^{1200}=1190$, $I_{0}^{1200}=10$ and
$R_{0}^{1200}=0$. The parameters distributions are uniformly distributed
with $\lambda /N \in [1/15000, 3/15000]$ and $\mu \in [1/15,3/15]$. Here
the randomness associated with the Poisson point measures is treated as
the nuisance random factor in \eqref{modelsto}.%

We compute the Jansen estimators of $S_{\lambda }$ and $S_{\mu }$ for
the deterministic meta-model \eqref{metamodele}, with $n=30{,}000$
simulations ($n(p+1)=90{,}000$ calls to the function $f$) and choose these
results as benchmark. For the estimators of $S_{\lambda }$ and
$S_{\mu }$ in the SDE, we compute the Jansen estimators with
$n=10{,}000$ (i.e. $n(p+1)=30{,}000$ calls to the function $f$), and the
estimators based on Nadaraya-Watson and on wavelet regressions with
$n=30{,}000$ simulations.

%f4 #&#
\begin{figure}
\centering
\begin{tabular}{cc}
\includegraphics[scale=0.25]{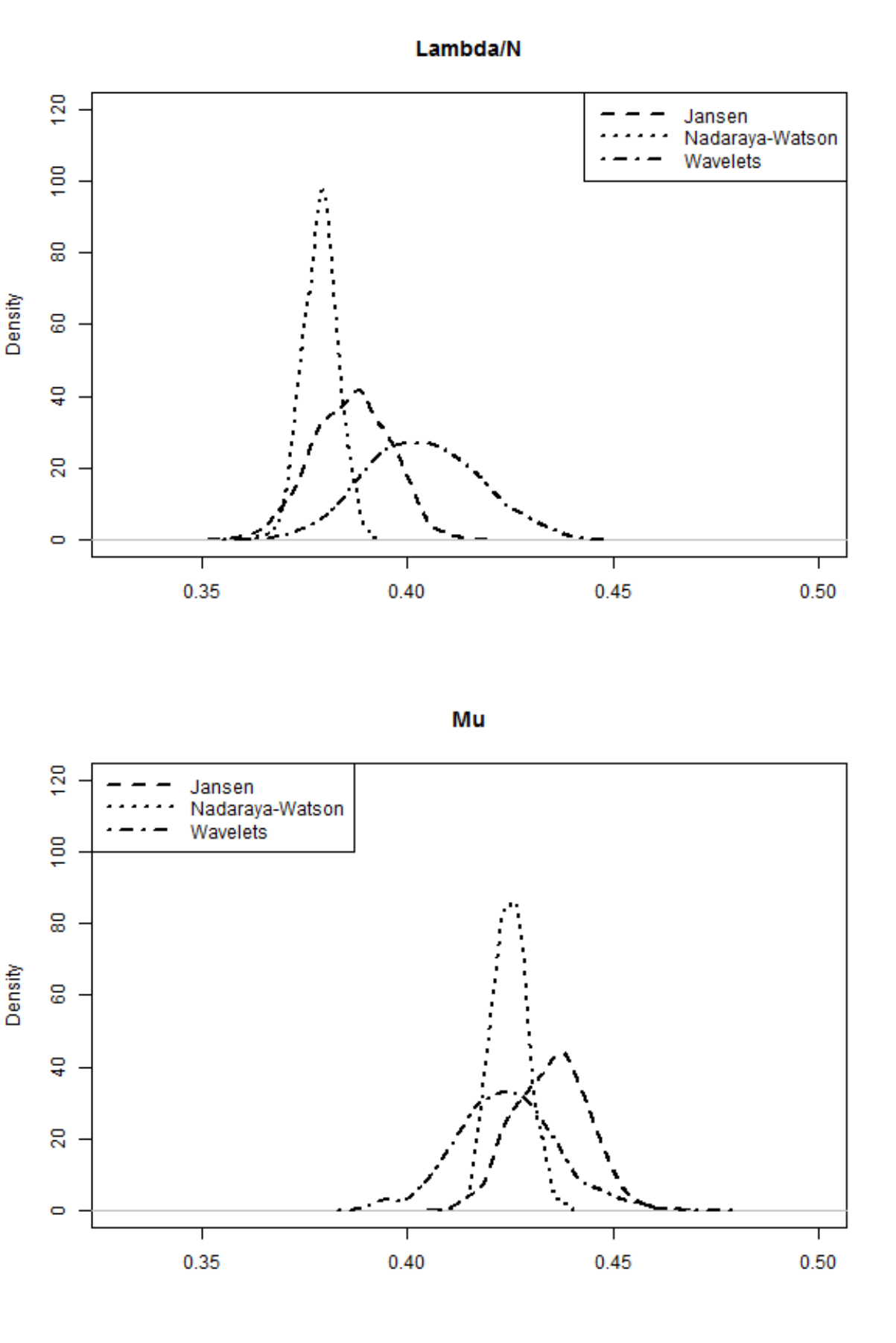} &
 \includegraphics[scale=0.25]{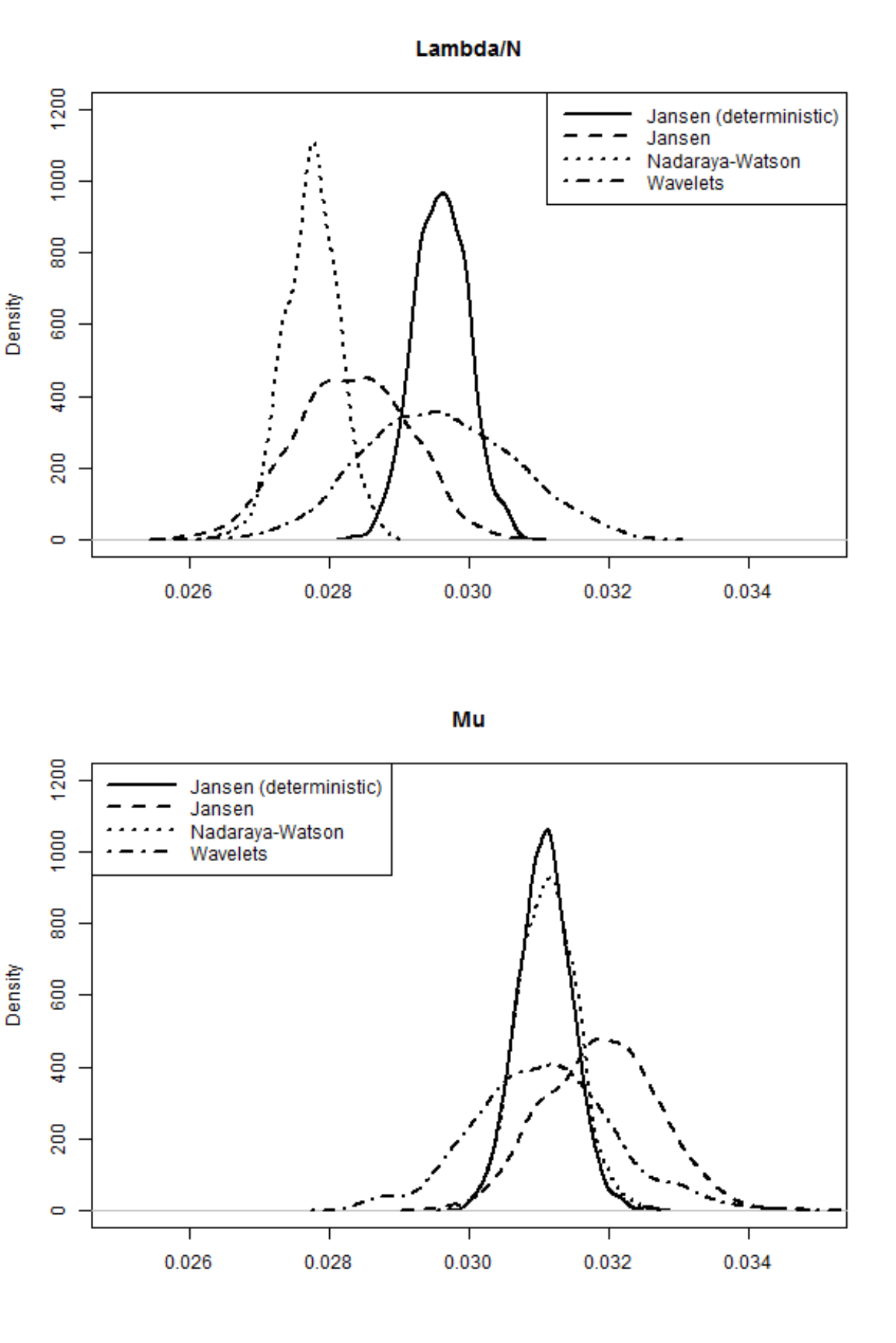}\\
(a) & (b)
\end{tabular}
\caption{{\small \textit{Estimations of the first order Sobol indices, using Jansen estimators on the meta-model with $n=10,000$ and the non-parametric estimations based on Nadaraya-Watson and wavelet regressions. (a): the distributions of the estimators of $S_{\lambda}$ and $S_{\mu}$ is approximated by Monte-carlo simulations. (b): the distributions of $\E(Y\ |\ \lambda)$ and $\E(Y\ |\ \mu)$ are approximated by Monte-Carlo simulations.}}}\label{fig:comp}
\end{figure}

Let us comment on the results. The comparison of the different
estimation methods is presented in Fig. \ref{fig:comp}. Since the
variances in the meta-model and in the stochastic model differ, we start
with comparing the distributions of $\mathbb{E}(Y\ |\ \lambda )$ and
$\mathbb{E}(Y\ |\ \mu )$ that are centered around the same value,
independently of whether the meta-model or the stochastic model is used
(Fig. \ref{fig:comp}(b)). These distributions are obtained from 1,000
Monte-carlo simulations. Because theoretical values are not available,
we take the meta-model as a benchmark. We see that the wavelet estimator
performs well for both $\lambda $ and $\mu $ while Nadaraya-Watson
regression estimator exhibit biases for $\lambda $. Jansen estimator on
the stochastic model exhibit biases for both $\lambda $ and
$\mu $.

We try to comment on the biases that are observed. When looking at Fig.
\ref{fig:exampleSIR30000}, the simulations can give very noisy $Y$'s:
extinctions of the epidemics can be seen in very short time in
simulations, due to the initial randomness of the trajectories. This
produces distributions for $Y$'s that are not unimodal or with peaks at
0, which makes the estimation of $\mathbb{E}(Y\ |\ \lambda )$ or
$\mathbb{E}(Y \ |\ \mu )$ more difficult. The wavelet estimator seems
to cope well with this situation.%

In a second time, we focus on the estimation of the Sobol indices for
the stochastic model with the SDE (we leave out the deterministic
meta-model for the reasons mentioned above). The smoothed distributions
of the estimators of $S_{\lambda }$ and $S_{\mu }$, for 1,000
Monte-Carlo replications, are presented in Fig. \ref{fig:comp}(a); the
means and standard deviations of these distributions are given in Table
\ref{table:resultSIR}. Although there is no theoretical values for
$S_{\lambda }$ and $S_{\mu }$, we can see (Table
\ref{table:resultSIR}) that the estimators of the Sobol indices with
non-parametric regressions all give similar estimates in expectation for
$\mu $. For $\lambda $, there are some discrepancies seen on Fig.
\ref{fig:comp}(a) and Table \ref{table:resultSIR}.

%t3 #&#
\begin{table}[!ht]
\caption{Estimators of the Sobol indices for $\lambda $ and $\mu $ and
their standard deviations using $n = $10{,}000 Monte-Carlo replications
of the stochastic SIR model.}
\label{table:resultSIR}
\centering
\begin{tabular}{|lccc|}
\hline
& Jansen & Nadaraya-Watson & Wavelet \\
\hline
$\widehat{S}_{\lambda }$ & 0.39 & 0.38 & 0.40 \\
s.d. & (9.2e-3) & (4.3e-3) & (1.4e-2) \\
\hline
$\widehat{S}_{\mu }$ & 0.44 & 0.42 & 0.42 \\
s.d. & (9.0e-3) & (4.4e-3) & (1.2e-2) \\
\hline
\end{tabular}
\end{table}

%f5 #&#
\begin{figure}
\centering
\includegraphics[height=8cm]{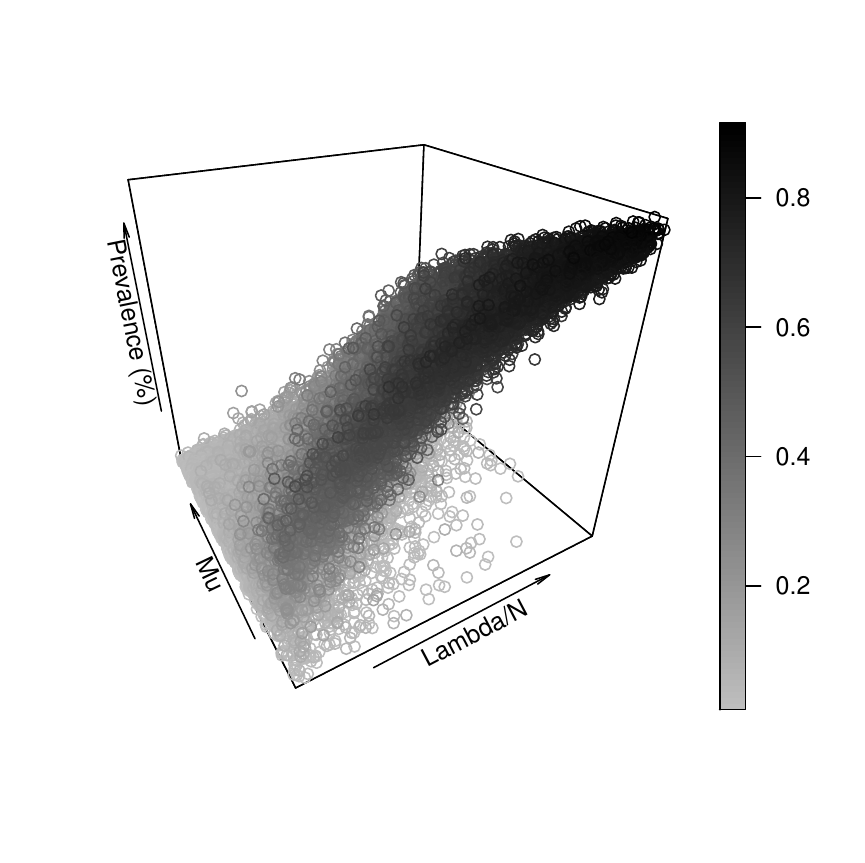}
\caption{{\small \textit{Prevalence ($Y$) simulated from the $n(p+1)=30,000$ simulations of $\lambda$ and $\mu$, for the SIR model. \label{fig:exampleSIR30000}}}}
\end{figure}

%s3.2 #&#
\subsection{Application to the spread of HVC among drug users}

Chronic Hepatitis C virus (HCV) is a major cause of liver failure in the
world, responsible of approximately 500,000 deaths annually
\cite{WHO}. HCV is a bloodborne disease, and the transmission remains
high in people who inject drugs (PWID) due to injecting equipment
sharing \cite{thorpe2002}. Until recently, the main approaches to
decrease HCV transmission among PWID in high income countries relied on
injection prevention and on risk reduction measures (access to sterile
equipment, opioid substitution therapies, etc.). The arrival of highly
effective antiviral treatments offers the opportunity to use the
treatment as a mean to prevent HCV transmission, by treating infected
PWID before they transmit the infection \cite{grebely2013}.\looseness=1

In this context, a stochastic, individual-based dynamic model was used
to assess the impact of the treatment on HCV transmission in PWID in
Paris area \cite{cousien1}. This model included HCV transmission on
a random graph modelling PWID social network, the cascade of care of
chronic hepatitis C and the progression of the liver disease. A brief
description of the model for HCV infection and cascade of care is
available in Fig. \ref{fig:HCVmodel}, for a detailed description and the
values and uncertainty intervals of the parameters, the reader can refer
to \cite{cousien1}. These parameters are the input of our model and
we assume for them uniform distributions on their uncertainty intervals.
Here, $Y$ is the prevalence after 10 years of simulation.

%f6 #&#
\begin{figure}
\centering
\includegraphics[scale=0.20]{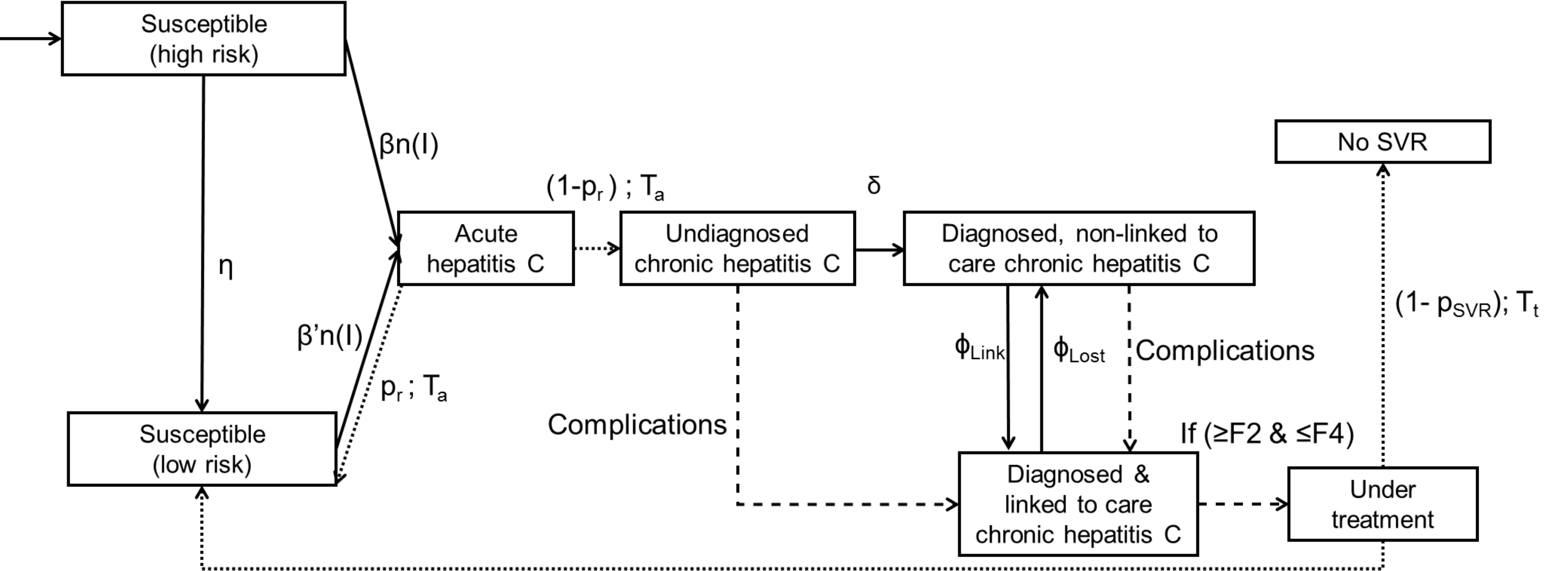}
\caption{{\small \textit{Diagram flow of infection and cascade of care modelling for HCV infection among PWID. Greek letters refer to rates, $p_r$ and $p_{SVR}$ to probabilities and $T_a$ and $T_t$ to (deterministic) time before leaving the compartment. $\beta$ depends on the status of the PWID with respect to the risk reduction measures (access to sterile injecting equipment, access to substitution therapies). $n_i$ denotes the number of infected injecting partners of the PWID. $\delta$ depends on the status of the PWID with respect to injection: active or inactive injector (i.e. before or after the cessation of injection). The liver disease progression is quantified by a score (score Metavir for the fibrosis progression) between F0 and F4 (cirrhosis). ``Complications" refers to the two cirrhosis complications: decompensated cirrhosis and hepatocellular carcinoma}}} \label{fig:HCVmodel}
\end{figure}

The parameter values used in this analysis were mainly provided by
epidemiological studies and were subject to uncertainty. This kind of
model requires high computing time, and thus the sensitivity analysis
using Monte-Carlo estimators of Sobol indices is difficult, due to the
number of simulations needed. Therefore, we focused on the seven main
ones: infection rate per partner, transition rate F0/F1 $>$ F2/F3, rate
of linkage to care and LFTU, average time to diagnosis, average time to
cessation, relative risk of infection (1st year), mortality among active
PWID. Other parameters contributions to the variance was considered as
negligible and we considered these parameters as noise in our estimates.\\

We estimate Sobol indices using the wavelet non-parametric estimator.
We used $n=10{,}000$ simulations of the model. We obtained unrealistic
results using leave-one-out cross validation procedure to select the
value of $K$ in the estimators proposed in \ref{def:estimateur2}.
However, keeping values $\widehat{\beta }^{\ell }_{jk}$ with $j<3$
produce realistic estimates. Thus, we kept all these coefficients to
produce the estimates.

For comparison, we also represented the sensitivity using a Tornado
diagram, classically used in Epidemiology. To build the Tornado diagram,
we first fix all the parameters but one to their values used in the
analysis and we let the free parameter vary in an uncertainty interval.
For each set of parameters thus obtained, the output $Y$ is computed.
Then, the parameters are sorted by decreasing variations of $Y$, and the
deviation from the main analysis results is represented in a bar plot.
We can compare the orders of the input parameters given by the Sobol
indices and by the Tornado diagram.%

%f7 #&#
\begin{figure}[!ht]
\centering
\includegraphics[scale=0.5]{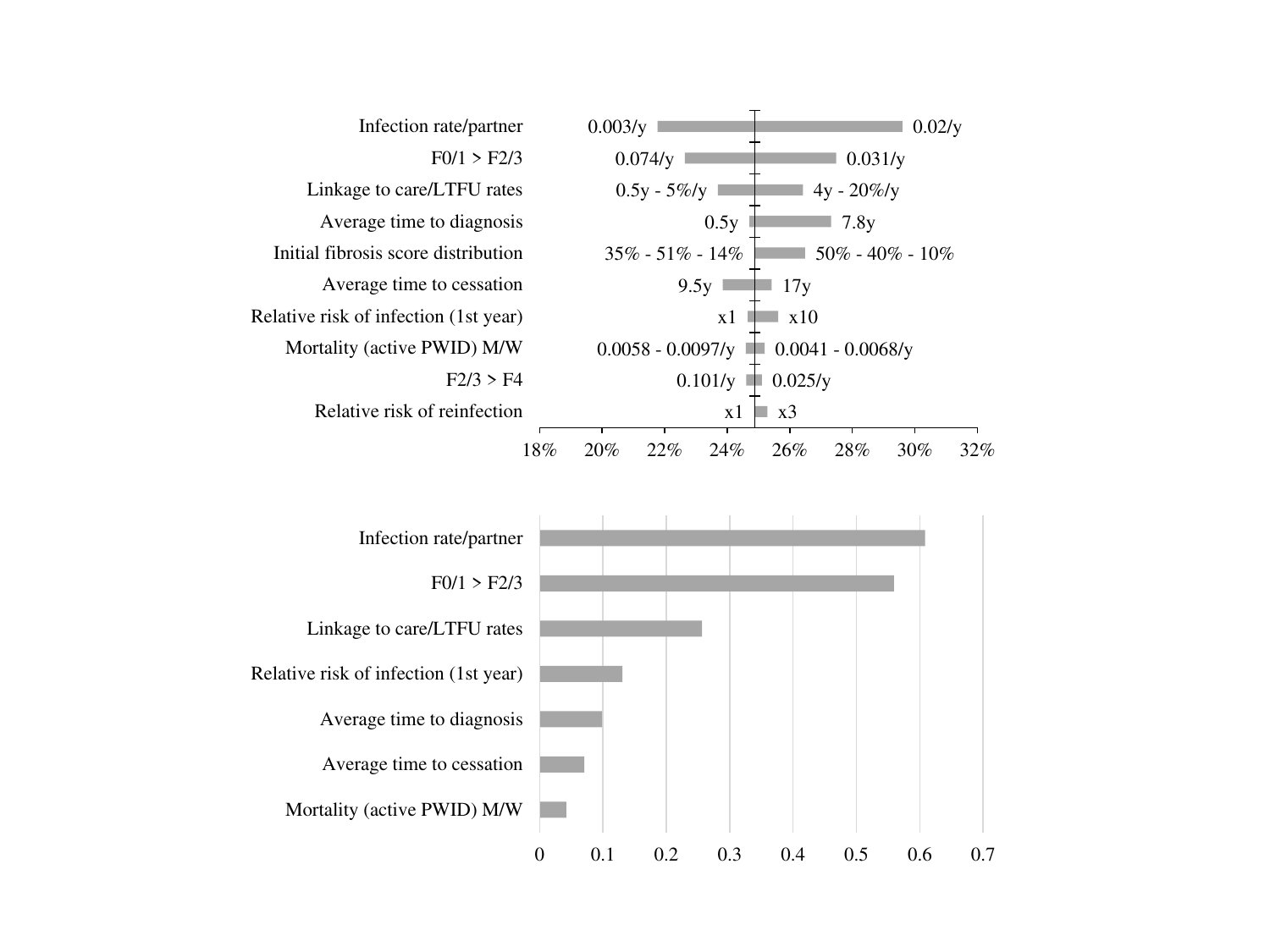}
\caption{{\small \textit{Tornado diagram (above): the variable $Y$ is plotted in abcissa and the vertical line corresponds to the expectation of $Y$ for the estimated parameters. We let each input variable vary separately between a lower bound and an upper bound, that are indicated left and right of each horizontal bar. The bars of the Tornado diagram are the corresponding values obtained for $Y$. Sobol indices (below): Sobol indices have been estimated using the wavelet estimators. Parameters have been sorted by decreasing values of their Sobol indices. What can be compared is the order of the various input variables in each method. LTFU=loss to follow-up, HCC=Hepatocellular carcinoma, M=Male, F=Female. ``Cessation" refers to the cessation of the injections. ``F0/F1 $>$ F2/F3" refers to the transition rate from a fibrosis score F0 or F1 to a fibrosis score F2 or F3 (and similarly for other rates). }}}\label{fig:HCVsa}
\end{figure}

The results are presented in Figure \ref{fig:HCVsa}. Since the Sobol
indices can be interpreted as the contribution of each parameter to the
variance of $Y$, we can thus see that a large part of the variance of
$Y$ is explained by the infection rate per infected partner alone, with
a Sobol index of 0.6, and by the transition rate from a fibrosis score
of F0/F1 to a score of F2/F3, with a Sobol index of 0.55. Next comes the
linkage to care/loss to follow-up rate. The rankings of the input
parameters obtained by the Sobol indices and the Tornado diagram
(obtained in \cite{cousien1}) are in accordance for the main
parameters. For the Tornado diagram, the most sensitive parameters (the
infection rate per infected injecting partner, the transition rate from
a fibrosis score of F0/F1 to a score of F2/F3 and the combination of the
linkage to care/loss to follow-up rate) were also varied together to
estimate the impact of the uncertainty about the linkage to care of
PWID. The Tornado diagram, which explores a much smaller region of the
parameter space by the way it is constructed, detects more noisy
contributions for the other factors. This appears, in the Tornado, in
the group of parameters having similar Sobol indices (average time to
diagnosis and cessation, relative risk of infection, mortality,
F2/F3$>$F4).

%s4 #&#
\section{Conclusion}%
\label{section:conclusions}

Sensitivity analysis is a key step in modelling studies, in particular
in epidemiology. Models often have a high number of parameters, which
are often seen as degrees of freedom to test scenarii and take into
account several interplaying phenomena and factors. The computation
of Sobol indices can indicate, among a long list of input parameters,
which ones can have an important impact on the outputs. The classical
estimators, like the Jansen estimator, require a large amount of
requests to the function $f$ that generates the output from the inputs.
The reason is that the Sobol indices are approximated, in these cases,
by quantities involving imbricated sums where parameters vary one by one.%

The literature on sensitivity analysis focuses on outputs that depend
deterministically on the inputs. When there is randomness, it is natural
to propose new approximations based on non-parametric estimations that
require a lower number of calls to $f$ since information brought by
simulations with close input parameters can also be used. No meta-model
is requested. Numerical study on toy models show that these estimators
can be used in deterministic settings too.%

Independently and at the same time as us \cite{cousienPhD},
Sol\'{i}s \cite{solisthesis,solis} introduced an estimator of the
Sobol indices of order 1 based on Nadaraya-Watson regressions. We hence
focus in this paper on an estimator of the Sobol indices based on
wavelet decompositions. For both of them, an elbow effect is proved:
under sufficient regularities, convergence rates of order $1/
\sqrt{n}$ can be achieved. On numerical toy examples, we obtained a
better MSE with the wavelet estimators than with the Jansen estimator
of same complexity. The non-parametric estimators allow a better
exploration of the parameter space: for each simulation, the whole set
of input parameters is drawn afresh. Compared with the Nadaraya-Watson
estimator, the wavelet estimator is adaptative, which means that the
unknown regularity of the model underlying the data does not need to be
known to calibrate the estimator. On simulations, our estimator behaves
similarly with Nadaraya-Watson estimator. When well-calibrated they can
overcome some smoothing biases that can appear when the output is very
noisy, which is the case in epidemic scenarii where there can be either
large outbreaks or quick extinction due to stochasticity, for example.%

Notice also that our proofs in the present paper are much shorter than
the proofs needed to study the estimator based on Nadaraya-Watson
regression. First, the wavelet estimator is a projection estimator and
the difficulties related with the fact that there is a fraction in the
Nadaraya-Watson estimator disappear. Second, we use elegant techniques
(developed independently from sensitivity analysis) on empirical
processes and concentration inequalities due to Castellan
\cite{castellan} to adapt the results of Laurent and Massard in the
Gaussian case \cite{laurentmassart}.%

This first order index $S_{\ell }$ corresponds to the sensitivity of the
model to $X_{\ell }$ alone. Higher order indices can also be defined
using ANOVA decomposition: considering $(\ell ,\ell ') \in \{1,\dots
, p\}$, we can define the second order sensitivity, corresponding to the
sensitivity of the model to the interaction between $X_{\ell }$ and
$X_{\ell '}$ index by
%
%e32 #&#
\begin{equation}
S_{\ell \ell '}=\frac{\mbox{Var}\big (\mathbb{E}[Y\ |\ X_{\ell },X_{
\ell '}]\big )}{\mbox{Var}(Y)} - S_{\ell } - S_{\ell '}
\label{def:Sobol2}
\end{equation}
We can also define the total sensitivity indices by
%
%e33 #&#
\begin{equation}
S_{T_{\ell }} = \sum _{L \subset \{1,\dots , p\} \, |\, \ell \in L} S
_{L}.
\label{def:SobolTot}
\end{equation}
These indices allow to assess 1) the sensitivity of the model to each
parameter taken separately and 2) the possible interactions, which are
quantified by the difference between the total order and the first order
index for each parameter. Estimation of higher order indices using
non-parametric techniques would be an interesting subject for further
researches.

%s5 #&#
\section{Proofs}%
\label{section:proof_th}

%s5.1 #&#
\subsection{Proof of Theorem \ref{th1}}

We follow the scheme of the proof of Theorem 1 in
\cite{laurentmassart}. The main difficulty here is that we are not in
a Gaussian framework and that we use the empirical process $\bar{
\gamma }_{n}$, which introduces much technical difficulties.

In the sequel, $C$ denotes a constant that can vary from line to line.%

Using Lemma \ref{lemme1}, we concentrate on the MSE $\mathbb{E}\big ((
\widehat{V}_{\ell }-V_{\ell })^{2}\big )$. First, we will prove that:
%
%e34 #&#
\begin{equation}
\mathbb{E}\Big [\Big (\widehat{V}_{\ell }-V_{\ell }- \zeta _{n} \Big )^{2}
\Big ]
\leq \inf _{\mathcal{J}\subset \{-1,\dots ,J_{n}\}} \mathbb{E}\Big [
\Big (-\widehat{V}_{\mathcal{J},\ell }+\mbox{pen}(\mathcal{J})+V_{
\ell }+ \zeta _{n} \Big )^{2}_{+}\Big ]+
\frac{C \log _{2}^{2}(n)}{n^{3/2}},
\label{oracle:etape1}
\end{equation}
where $\widehat{V}_{\mathcal{J},\ell }$ has been defined in
\eqref{def:est_interm2}. The penalization term associated to a subset
$\mathcal{J}\subset \{-1,\dots J_{n}\}$ has been defined in
\eqref{def:penalty}. Then, considering the first term in the r.h.s. of
\eqref{oracle:etape1}, we prove:
%
%e35 #&#
\begin{align}
\mathbb{E}\Big [\Big (-\widehat{V}_{\mathcal{J},\ell }+\mbox{pen}(
\mathcal{J})+V_{\ell }+ \zeta _{n} \Big )^{2}_{+}\Big ] \leq C \Big (\|h
_{\ell }- h_{\mathcal{J},\ell }\|_{2}^{4} + \frac{\mbox{Card}^{2}(
\mathcal{J})}{n^{2}}\Big )
\label{oracle:etape2}
\end{align}
\textbf{Step 1:}%

\noindent From \eqref{lien_pen}, and letting $A_{\mathcal{J}}=\widehat{V}_{
\mathcal{J},\ell }-\mbox{pen}(\mathcal{J})-V_{\ell }- \zeta _{n}$, we
have:
\begin{equation*}
\widehat{V}_{\ell }-V_{\ell }- \zeta _{n}=
\sup _{\mathcal{J}\subset \{-1,\dots ,J_{n}\}} A_{\mathcal{J}}.
\end{equation*}
Since
\begin{equation*}
\sup _{\mathcal{J}} A_{\mathcal{J}}= \sup _{\mathcal{J}} \big (A_{
\mathcal{J}}\big )_{+}\ \ind _{\{\sup _{\mathcal{J}} A_{\mathcal{J}}
\geq 0\}}- \inf _{\mathcal{J}} \big (A_{\mathcal{J}}\big )_{-} \ \ind
_{\{\sup _{\mathcal{J}} A_{\mathcal{J}}< 0\}},
\end{equation*}
we obtain by taking the absolute values that
\begin{equation*}
\Big |\sup _{\mathcal{J}} A_{\mathcal{J}}\Big | \leq \max \Big [
\sup _{\mathcal{J}} \big (A_{\mathcal{J}}\big )_{+} , \inf _{\mathcal{J}}
\big (A_{\mathcal{J}}\big )_{-} \Big ].
\end{equation*}
This provides that
%
%e36 #&#
\begin{align}
\mathbb{E}\Big (\sup _{\mathcal{J}} A^{2}_{\mathcal{J}}\Big )\leq
&
\sum _{\mathcal{J}\subset \{-1,\dots ,J_{n}\}} \mathbb{E}\Big ( \big (A
_{\mathcal{J}}\big )^{2}_{+}\Big )+
\inf _{\mathcal{J}\subset \{-1,\dots ,J_{n}\}} \mathbb{E}\Big (\big (A
_{\mathcal{J}}\big )^{2}_{-}\Big )
\nonumber
\\
\leq
& \sum _{\mathcal{J}\subset \{-1,\dots ,J_{n}\}} \mathbb{E}
\Big ( \big (A_{\mathcal{J}}\big )^{2}_{+}\Big )
+\inf _{\mathcal{J}\subset \{-1,\dots ,J_{n}\}} \mathbb{E}
\Big ( \big (- \widehat{V}_{\mathcal{J},\ell } + \mbox{pen}(\mathcal{J})+
V_{\ell }+\zeta _{n}\big )^{2}_{+} \Big ).
\label{etape:but1}
\end{align}
The second term corresponds to what appears in \eqref{oracle:etape1} and
will be treated in Step 4 to obtain \eqref{oracle:etape2}. Let us
consider the first term of the r.h.s.

From \eqref{def:penalty}, we have:
%
%e37 #&#
\begin{align}
\mbox{pen}(\mathcal{J})=
& \sum _{j\in \mathcal{J}} w(j)= \frac{K}{n}
\sum _{j\in \mathcal{J}} (2^{j}+\log 2)
= \mbox{pen}_{1}(\mathcal{J})+
\mbox{pen}_{2}(\mathcal{J}),
\label{def:pen}
\end{align}
with
%
%e38 #&#
%e39 #&#
\begin{align}
\mbox{pen}_{1}(\mathcal{J})=
& \frac{K }{n}\sum _{j\in \mathcal{J}} 2^{j}
\label{def:penalisations}%
\\
\mbox{pen}_{2}(\mathcal{J})=
& \mbox{pen}(\mathcal{J})-\mbox{pen}
_{1}(\mathcal{J})= \frac{K}{n} \mbox{Card}(\mathcal{J})\ \log 2 .
\end{align}
Using this, we start by rewriting
%e40 #&#
\begin{align}
A_{\mathcal{J}}=
& \widehat{V}_{\mathcal{J},\ell } -\mbox{pen}(
\mathcal{J})-V_{\ell }- \zeta _{n}
\nonumber
\\
=
& \| \widehat{h}_{\mathcal{J},\ell }\|^{2}_{2}-\mbox{pen}(
\mathcal{J}) - \|h_{\ell }\|^{2}_{2} -\zeta _{n}
\nonumber
\\
=
& \big (\| \widehat{h}_{\mathcal{J},\ell }-h_{\mathcal{J},\ell } \|
^{2}_{2} + \|h_{\mathcal{J},\ell }\|^{2}_{2} + 2\langle \widehat{h}
_{\mathcal{J},\ell }-h_{\mathcal{J},\ell }, h_{\mathcal{J},\ell }
\rangle \big )
\nonumber
\\
& - \big (\|h_{\ell }-h_{\mathcal{J},\ell }\|^{2}_{2} +\|h_{
\mathcal{J},\ell }\|^{2}_{2}+2\langle h_{\ell }-h_{\mathcal{J},\ell },
h_{\mathcal{J},\ell } \rangle \big )-\zeta _{n}-\mbox{pen}(\mathcal{J})
\nonumber
\\
=
& \| \widehat{h}_{\mathcal{J},\ell }-h_{\mathcal{J},\ell } \|^{2}
_{2} -\mbox{pen}_{1}(\mathcal{J})
 + 2\langle \widehat{h}_{\mathcal{J},\ell }-h_{\mathcal{J},\ell }, h
_{\mathcal{J},\ell } \rangle -\|h_{\ell }-h_{\mathcal{J},\ell }\|^{2}
_{2} -\zeta _{n} - \mbox{pen}_{2}(\mathcal{J}),
\label{etape3}
\end{align}
since $\langle h_{\ell }-h_{\mathcal{J},\ell }, h_{\mathcal{J},\ell }
\rangle =0$ by definition of $h_{\mathcal{J},\ell }$ as projection of
$h_{\ell }$ on the subspace generated by $\{\psi _{jk},\ j\in
\mathcal{J},\, k\in \mathbb{Z}\}$.%

Thus:
%
%e41 #&#
\begin{multline}
\mathbb{E}\Big ( \big (A_{\mathcal{J}}\big )^{2}_{+}\Big )\leq 2
\mathbb{E}\Big ( \big (\| \widehat{h}_{\mathcal{J},\ell }-h_{
\mathcal{J},\ell } \|^{2}_{2} - \mbox{pen}_{1}(\mathcal{J})\big )^{2}
\Big )
\\
+
2 \mathbb{E}\Big ( \big (2\langle \widehat{h}_{\mathcal{J},\ell }-h
_{\mathcal{J},\ell }, h_{\mathcal{J},\ell } \rangle -\|h_{\ell }-h
_{\mathcal{J},\ell }\|^{2}_{2} -\zeta _{n} -\mbox{pen}_{2}(\mathcal{J})
\big )^{2}\Big ).
\label{etape8}
\end{multline}
The first term in the r.h.s. is treated in Step 2, and the second term
in Step 3. After summation over $\mathcal{J}\subset \{-1,\dots , J
_{n}\}$, this provides an upper bound for the first term in the r.h.s.
of \eqref{etape:but1} which provides \eqref{oracle:etape1}.%

\medskip\noindent
\textbf{Step 2: Upper bound of the first term in the r.h.s. of \eqref{etape8}}%

\medskip\noindent
\underline{Reformulation of $\| \widehat{h}_{\mathcal{J},\ell }-h_{\mathcal{J},\ell } \|^{2}_{2}$}

\medskip
The first term in the r.h.s. of \eqref{etape3} is the approximation
error of $h_{\mathcal{J}}$ by $\widehat{h}_{\mathcal{J},\ell }$ and
equals
\begin{equation*}
\| \widehat{h}_{\mathcal{J},\ell }-h_{\mathcal{J},\ell } \|^{2}_{2}=
\sum _{j\in \mathcal{J}} \sum _{k\in \mathbb{Z}} \big (\widehat{\beta }
_{jk}-\beta _{jk}\big )^{2}=\sum _{j\in \mathcal{J}} \sum _{k\in
\mathbb{Z}} \bar{\gamma }_{n}\big (\psi _{jk}\big )^{2}.
\end{equation*}
To control it, let us introduce, for coefficients $a=(a_{jk},\ -1
\leq j\leq J_{n},\ k\in \mathbb{Z})$, the set
\begin{equation*}
\mathcal{F}_{1,\mathcal{J}}=\Big \{f=\sum _{j\in \mathcal{J}}
\sum _{k\in \mathbb{Z}} a_{jk} \psi _{jk},\ a_{jk}\in \mathbb{Q},\ \|a
\|_{2}\leq 1 \Big \},
\end{equation*}
which is countable and dense in the unit ball of $L^{2}([0,1])$. Thus,
%
%e42 #&#
\begin{align}
\Big (\sum _{j\in \mathcal{J}} \sum _{k\in \mathbb{Z}} \bar{\gamma }_{n}
\big (\psi _{jk}\big )^{2}\Big )^{1/2}=
& \sup _{\|a\|_{2}\leq 1} \Big |
\sum _{j\in \mathcal{J}} \sum _{k\in \mathbb{Z}} a_{jk} \bar{\gamma }
_{n}\big (\psi _{jk}\big )\Big |
\nonumber
\\
=
& \sup _{\|a\|_{2}\leq 1} \Big |\bar{\gamma }_{n}\Big (
\sum _{j\in \mathcal{J}} \sum _{k\in \mathbb{Z}} a_{jk} \psi _{jk}\Big )
\Big |
\nonumber
\\
=
& \sup _{f \in \mathcal{F}_{1,\mathcal{J}}} \big |\bar{\gamma }_{n}(f)
\big |:= \chi _{n}(\mathcal{J}).
\label{etape7}
\end{align}
Let us introduce, for $\rho >0$,
\vspace*{-3pt}
%e43 #&#
\begin{equation}
\label{def:Omega}
\Omega _{\mathcal{J}}(\rho )=\big \{\forall j\in \mathcal{J},\,
\sum _{k\in \mathbb{Z}} \big |\bar{\gamma }_{n}(\psi _{jk})\big |\leq
\rho 2^{-j/2}\big \}.
\end{equation}
Then, to upper bound the first term in \eqref{etape8}, we can write:
\vspace*{-3pt}
%e44 #&#
\begin{align}
\mathbb{E}\Big ( \big (\| \widehat{h}_{\mathcal{J},\ell }-h_{
\mathcal{J},\ell } \|^{2}_{2} - \mbox{pen}_{1}(\mathcal{J})\big )^{2}
\Big )\leq 2A_{1} (\mathcal{J}) + 2A_{2}(\mathcal{J})
\label{decompo:A1A2}
\end{align}
where, for $\chi _{n}(\mathcal{J})$ defined in \eqref{etape7},
%
%e45 #&#
\begin{align}
& A_{1}(\mathcal{J})= \mathbb{E}\Big ( \big (\chi _{n}^{2}(\mathcal{J}) \ind
_{\Omega _{\mathcal{J}}(\rho )} - \mbox{pen}_{1}(\mathcal{J})\big )^{2}
\Big ),
\nonumber
\\
\mbox{and }\quad
& A_{2}(\mathcal{J})= \mathbb{E}\Big ( \chi _{n}^{4}(
\mathcal{J}) \ind _{\Omega ^{c}_{\mathcal{J}}(\rho )}\Big ).
\end{align}
The upper bounds of $A_{1}(\mathcal{J})$ and $A_{2}(\mathcal{J})$ make
the object of the remainder of Step 2. We use ideas developed in
\cite{castellan}.%

\medskip\noindent
\underline{Upper bound for $A_{1}(\mathcal{J})$}%

\medskip
To upper bound $A_{1}(\mathcal{J})$, we use the identity
%
%e46 #&#
\begin{equation}
A_{1}(\mathcal{J})=\int _{0}^{+\infty } 2t \ \mathbb{P}\big (\chi _{n}
^{2}(\mathcal{J}) \ind _{\Omega _{\mathcal{J}}(\rho )} - \mbox{pen}
_{1}(\mathcal{J})>t\big )\ dt,
\label{debut:A1}%
\end{equation}
and look for deviation inequalities of $\chi _{n}^{2}(\mathcal{J}) \ind
_{\Omega _{\mathcal{J}}(\rho )}$. Then, estimates of the probability of
$\Omega ^{c}_{\mathcal{J}}(\rho )$ are studied to control $A_{2}(
\mathcal{J})$.

Recall that $\chi _{n}(\mathcal{J})$ (resp. $\Omega _{\mathcal{J}}(
\rho )$) has been defined in \eqref{etape7} (resp. \eqref{def:Omega}).
The supremum in \eqref{etape7} is obtained for
\vspace*{-3pt}
%e47 #&#
\begin{equation}
\bar{a}_{jk} = \frac{\bar{\gamma }_{n}(\psi _{jk})}{\chi _{n}(
\mathcal{J})}.
\end{equation}
On the set $\Omega _{\mathcal{J}}(\rho )\cap \{\chi _{n}(\mathcal{J}) >
z\}$, for a constant $z>0$ that shall be fixed in the sequel, we have
for all $j\in \mathcal{J}$,
\vspace*{-3pt}
\begin{equation*}
\sum _{k\in \mathbb{Z}} \big |\bar{a}_{jk} \big |= \frac{
\sum _{k\in \mathbb{Z}} \big |\bar{\gamma }_{n}(\psi _{jk})\big |}{\chi
_{n}(\mathcal{J})}\leq \frac{\rho 2^{-j/2}}{z}.
\end{equation*}
As a consequence, on the set $\Omega _{\mathcal{J}}(\rho )\cap \{\chi
_{n}(\mathcal{J} )> z\}$, we can restrict the research of the optima to
the set
\vspace*{-3pt}
%e48 #&#
\begin{multline}
\Lambda _{\mathcal{J}}=\Big \{f=\sum _{j\in \mathcal{J}}
\sum _{k\in \mathbb{Z}}a_{jk}\psi _{jk}\in \mathcal{F}_{1,\mathcal{J}},
\\
\mbox{ and } \ a_{jk}=0 \mbox{ if }j\notin \mathcal{J},\ \sum _{k\in \mathbb{Z}}\big |a_{jk}\big |
\leq \frac{\rho 2^{-j/2}}{z}\mbox{ if }j\in \mathcal{J}\Big \},
\end{multline}
which is countable.\\

We can then use Talagrand inequality (see
\cite[p.170]{massart_concentration}) to obtain that for all
$\eta >0$ and $x>0$,
%
%e49 #&#
\begin{equation}
\mathbb{P}\Big (\sup _{f \in \Lambda _{\mathcal{J}}} \big |\bar{\gamma }
_{n}(f)\big |\geq (1+\eta )\mathbb{E}\big (
\sup _{f \in \Lambda _{\mathcal{J}}} \big |\bar{\gamma }_{n}(f)\big |
\big )+\sqrt{2 \nu _{n} x}+\big (\frac{1}{3}+\frac{1}{\eta }\big )b_{n}
x\Big )\leq e^{-x},
\label{talagrand}
\end{equation}
where the quantities $\nu _{n}$ and $b_{n}$ can be chosen respectively
as $\nu _{n}=M^{2}/n$ and $b_{n}=2M \|\psi \|_{\infty }\rho
\mbox{Card}(\mathcal{J}) /nz$.%

Indeed, $\nu _{n}$ is an upper bound of:
%
%e50 #&#
\begin{equation}
\frac{1}{n} \sup _{f \in \Lambda _{\mathcal{J}}} \mbox{Var}\Big (Y_{1} f
\big (G_{\ell }(X_{\ell }^{1})\big )\Big )\leq \frac{M^{2}}{n}
\sup _{f \in \Lambda _{\mathcal{J}}}\|f\|_{2}^{2}\leq \frac{M^{2}}{n},
\end{equation}
where the last inequality comes from the definition of $
\Lambda _{\mathcal{J}}$ and $\mathcal{F}_{1,\mathcal{J}}$.

As for the term $b_{n}$, it is an upper bound of:
%
%e51 #&#
\begin{multline}
\frac{1}{n} \sup _{f \in \Lambda _{\mathcal{J}}}\sup _{(u,y)\in [0,1]\times [-M,M]} \Big | y f(u)-\mathbb{E}\Big (Y_{1} f\big (G_{\ell }(X_{\ell }^{1})\big )\Big )\Big |
\\
\leq
 \frac{2 M}{n} \sum _{j\in \mathcal{J}} \sum _{k\in \mathbb{Z}} |a
_{jk}| 2^{j/2} \|\psi \|_{\infty }
\leq
 \frac{2 M \|\psi \|_{\infty }}{n} \sum _{j\in \mathcal{J}} \frac{
\rho 2^{-j/2}}{z} 2^{j/2}= \frac{2 M \|\psi \|_{\infty }\rho \ \mbox{Card}(\mathcal{J})}{n \ z} ,
\end{multline}
if $f=\sum _{j\in \mathcal{J}}\sum _{k\in \mathbb{Z}} a_{jk}\psi _{jk}$.
For the expectation appearing in the probability in the r.h.s. of
\eqref{talagrand}, we have:
%
%e52 #&#
\begin{align}
& \mathbb{E}\big (\sup _{f \in \Lambda _{\mathcal{J}}} \big |\bar{\gamma
}_{n}(f)\big |\big )\leq \mathbb{E}\big (\chi _{n}(\mathcal{J})\big )
\leq \sqrt{\mathbb{E}\big (\chi _{n}^{2}(\mathcal{J})\big )}= \sqrt{
\sum _{j\in \mathcal{J}}\sum _{k\in \mathbb{Z}} \mathbb{E}\big (\bar{
\gamma }_{n}^{2}(\psi _{jk})\big )}
\nonumber
\\
&
\hspace{2.8cm}
= \sqrt{\sum _{j\in \mathcal{J}}\sum _{k\in \mathbb{Z}} \frac{1}{n}
\mbox{Var}\big (Y_{1} \psi _{jk}(G_{\ell }(X^{1}_{\ell }))\big )}
\nonumber
\\
&
\hspace{2.8cm}
\leq \sqrt{\sum _{j\in \mathcal{J}}\sum _{k\in \mathbb{Z}}
\frac{1}{n}\mathbb{E}\big (Y_{1}^{2} \psi ^{2}_{jk}(G_{\ell }(X^{1}_{
\ell }))\big )}
\nonumber
\\
&
\hspace{2.8cm}
\leq \frac{M}{\sqrt{n}}\sqrt{\sum _{j\in \mathcal{J}}
\sum _{k\in \mathbb{Z}} \int _{0}^{1} \psi ^{2}_{jk}(u)du} \leq \frac{M}{
\sqrt{n}}\sqrt{ C' \sum _{j\in \mathcal{J}}2^{j}}
\label{etape14}
\end{align}
by the fact that the wavelets $\psi _{jk}$ have compact supports and
satisfy $\|\psi _{jk}\|_{2}^{2}=1$. The constant $C'$ in
\eqref{etape14} is the number of wavelets $(\psi _{0k})_{k\in
\mathbb{Z}}$ that intersect $[0,1]$. Thus for a given $j\geq 0$, the
number of wavelets $(\psi _{jk})_{k\in \mathbb{Z}}$ that instersect
$[0,1]$ is of order $2^{j} C'$.

Because we have on $\Omega _{\mathcal{J}}(\rho )\cap \{\chi _{n}(
\mathcal{J}) > z\}$ that $\chi _{n}(\mathcal{J})=
\sup _{f \in A_{\mathcal{J}}} \big |\bar{\gamma }_{n}(f)\big |$, we deduce
that $\sup _{f \in A_{\mathcal{J}}} \big |\bar{\gamma }_{n}(f)\big |
\geq \chi _{n}(\mathcal{J}) \ind _{\Omega _{\mathcal{J}}(\rho )\cap \{
\chi _{n}(\mathcal{J} > z\}}$. Then, Equations
\eqref{talagrand}-\eqref{etape14} become:
\vspace*{-9pt}
\begin{multline*}
\mathbb{P}\Big (\chi _{n}(\mathcal{J}) \ind _{\Omega _{\mathcal{J}}(
\rho )\cap \{\chi _{n}(\mathcal{J}) > z\}} \geq (1+\eta )M
\sqrt{\frac{C' \sum _{j\in \mathcal{J}} 2^{j}}{n}}+\sqrt{\frac{2 M
^{2} x}{n}}
\\
+\big (\frac{1}{3}+\frac{1}{\eta }\big )\frac{2 M \|\psi \|_{\infty }
\rho \ \mbox{Card}(\mathcal{J})}{n \ z} x\Big )\leq e^{-x}.
\end{multline*}
Choosing $z=\sqrt{\frac{2x}{n}}\big (\frac{1}{3}+\frac{1}{\eta }
\big ) \|\psi \|_{\infty }$, we obtain:
\begin{equation*}
\mathbb{P}\Big (\chi _{n}(\mathcal{J}) \ind _{\Omega _{\mathcal{J}}(
\rho )\cap \{\chi _{n}(\mathcal{J}) > z\}} \geq (1+\eta )M
\sqrt{\frac{C' \sum _{j\in \mathcal{J}} 2^{j}}{n}}
+(1+\rho \mbox{Card}(\mathcal{J}))M \sqrt{\frac{2 x}{n}}\Big )
\leq e^{-x}.
\end{equation*}
For the choice of $\rho =\big (\frac{1}{3}+\frac{1}{\eta }\big ) \|
\psi \|_{\infty }$, the r.h.s. in the probability above is larger than
$z=\sqrt{\frac{2x}{n}}\big (\frac{1}{3}+\frac{1}{\eta }\big ) \|
\psi \|_{\infty }$, and we can get rid of the constraint $\{\chi _{n}(
\mathcal{J})>z\}$. Finally, choosing $x=x_{\mathcal{J}}+\xi $, with
%
%e53 #&#
\begin{equation}
\label{choix:xJ-1}
x_{\mathcal{J}}= \log \Big (\sum _{j\in \mathcal{J}} 2^{j}\Big ),
\end{equation}
we obtain by using $(a+b)^{2}\leq 2a^{2}+2b^{2}$ that:
\begin{multline*}
\mathbb{P}\Big (\chi ^{2}_{n}(\mathcal{J}) \ind _{\Omega _{\mathcal{J}}(
\rho )} - \frac{2}{n} \Big [(1+\eta )^{2} M^{2} C' \sum _{j\in
\mathcal{J}} 2^{j} +2 (1+\rho \mbox{Card}(\mathcal{J}))^{2} M^{2} x
_{\mathcal{J}} \Big ]
\geq h_{\mathcal{J}}(\xi )\Big )
\\
\leq e^{-x_{\mathcal{J}}}e^{-\xi },
\end{multline*}
where
%
%e54 #&#
\begin{equation}
h_{\mathcal{J}}(\xi )=\frac{4(1+\rho \mbox{Card}(\mathcal{J}))^{2} M
^{2}}{n} \xi .
\label{def:hxi}
\end{equation}
The square bracket in the l.h.s. inside the probability can be upper
bounded by $n\mbox{pen}_{1}(\mathcal{J})= K \sum _{j\in \mathcal{J}}
2^{j}$, for an appropriate constant $K$ that does not depend on
$\mathcal{J}$. Indeed, denoting by $J_{\mbox{{\scriptsize max}}}=
\max \mathcal{J}$, we have that $\mbox{Card}^{2}(\mathcal{J})\leq J
_{\mbox{{\scriptsize max}}}^{2}$ while $\sum _{j\in \mathcal{J}}2^{j}
\geq 2^{J_{\mbox{{\scriptsize max}}}}$. Since $2^{j}\geq j^{2}$ for all
interger $j\not =3$, the result follows. Then:
%
%e55 #&#
\begin{align}
\mathbb{P}\Big (\chi ^{2}_{n}(\mathcal{J}) \ind _{\Omega _{\mathcal{J}}(
\rho )} - \mbox{pen}_{1}\big (\mathcal{J}\big )
\geq h_{\mathcal{J}}(
\xi )\Big )\leq e^{-x_{\mathcal{J}}}e^{-\xi }.
\label{etape5}
\end{align}
From \eqref{debut:A1} and \eqref{etape5},
\begin{equation*}
A_{1}(\mathcal{J})\leq \int _{0}^{+\infty } 2t e^{-x_{\mathcal{J}}}e
^{-h_{\mathcal{J}}^{-1}(t)} dt,
\end{equation*}
with
\begin{equation*}
h_{\mathcal{J}}^{-1}(t)=\frac{nt}{4(1+\rho \mbox{Card}(\mathcal{J}))^{2}
M^{2}}.
\end{equation*}
Thus:
%
%e56 #&#
\begin{align}
A_{1}(\mathcal{J}) \leq
& \int _{0}^{+\infty } 2t e^{-x_{\mathcal{J}}}
\exp \Big (-\frac{nt}{4(1+\rho \mbox{Card}(\mathcal{J}))^{2} M^{2}}
\Big )dt
\nonumber
\\
\leq
& \frac{32(1+\rho \mbox{Card}(\mathcal{J}))^{4} M^{4}}{n^{2}} e
^{-x_{\mathcal{J}}} \leq C \frac{\mbox{Card}^{4}(\mathcal{J}) e^{-x
_{\mathcal{J}}}}{n^{2}}
\nonumber
\\
\leq
& \frac{C \mbox{Card}^{2}(\mathcal{J})}{n^{2}},
\label{etape:fin1}
\end{align}
using $(\sum _{j\in \mathcal{J}}2^{j})^{-1} \leq C/\mbox{Card}^{2}(
\mathcal{J})$.%

From the choice of $x_{\mathcal{J}}$ \eqref{choix:xJ-1}, we have:
\begin{align*}
\sum _{\mathcal{J}\subset \{-1,\dots , J_{n}\}} \mbox{Card}^{2}(
\mathcal{J}) \leq C 2^{J_{n}} J_{n}^{2} \leq C \sqrt{n} \log _{2}
^{2} (n),
\end{align*}
by choice of $J_{n}=\log _{2}\big (\sqrt{n}\big )$. From this and
\eqref{etape:fin1}, we deduce that:
%
%e57 #&#
\begin{equation}
\sum _{\mathcal{J}\subset \{-1,\dots , J_{n}\}}A_{1}(\mathcal{J})
\leq \frac{C \log _{2}^{2} (n)}{n^{3/2}}.
\end{equation}
\underline{Upper bound of $A_{2}(\mathcal{J})$}

\medskip
For the term $A_{2}(\mathcal{J})$ of \eqref{decompo:A1A2}, we have, for
$(j,k)$ such that $j\not =-1$:
\begin{align*}
|\bar{\gamma }_{n}(\psi _{jk})|\leq
& M2^{j/2} \|\psi \|_{\infty }+ M 2^{-j/2}
\int _{\mathbb{R}}|\psi (u)|du.
\end{align*}
Thus, for a constant $C$ that depends only on the choice of
$\psi _{-10}$ and $\psi _{00}$:
%
%e58 #&#
\begin{equation}
A_{2}(\mathcal{J}) \leq \Big [C\sum _{j \in \mathcal{J}}\Big (M2^{j/2}
\|\psi \|_{\infty }+ M 2^{-j/2}\int _{\mathbb{R}}|\psi (u)|du\Big )^{2}
\Big ]^{2}\times \mathbb{P}\Big (\Omega ^{c}_{\mathcal{J}}(\rho )\Big ).
\label{etape10}
\end{equation}
Since:
\begin{align*}
& \sum _{i=1}^{n} \mathbb{E}\Big [\Big (\frac{Y_{i} \psi _{jk}\big (G_{
\ell }(X_{\ell }^{i})\big )-\mathbb{E}\big (Y_{1} \psi _{jk}(G_{\ell }(X
_{\ell }^{1}))\big )}{n}\Big )^{2}\Big ]
=
\frac{\mbox{Var}\big (Y_{1} \psi _{jk}(G_{\ell }(X_{\ell }^{1}))\big )}{n}
\leq \frac{M^{2}}{n},
\\
& \Big |\frac{Y_{i} \psi _{jk}\big (G_{\ell }(X_{\ell }^{i})\big )-
\mathbb{E}\big (Y_{1} \psi _{jk}(G_{\ell }(X_{\ell }^{1}))\big )}{n}
\Big |\leq \frac{2 M 2^{j/2} \|\psi \|_{\infty }}{n}\mbox{ a.s.}
\end{align*}
then we have by Bernstein's inequality (e.g.
\cite{massart_concentration}):
\begin{align*}
\mathbb{P}\big (\big |\bar{\gamma }_{n}(\psi _{jk})\big |\geq \rho 2^{-j/2}
\big )\leq 2 \exp \Big (-\frac{n \rho ^{2} 2^{-j}}{2 \big (M^{2}+2 M \|
\psi \|_{\infty }\rho \big )}\Big ).
\end{align*}
As a consequence, recalling that $J_{\mbox{{\scriptsize max}}}=\max
\mathcal{J}$, we have
%
%e59 #&#
\begin{align}
\sum _{\mathcal{J}\subset \{-1,\dots J_{n}\}} A_{2}(\mathcal{J})\leq
& \sum _{\mathcal{J}\subset \{-1,\dots J_{n}\}} 2^{2J_{
\mbox{{\scriptsize max}}}} \mathbb{P}\Big (\exists (j,k)\in
\mathcal{J}\times \mathbb{Z},\ \big |\bar{\gamma }_{n}(\psi _{jk})
\big |\geq \rho 2^{-j/2} \Big )
\nonumber
\\
\leq
& C \sum _{\mathcal{J}\subset \{-1,\dots J_{n}\}} 2^{3J_{
\mbox{{\scriptsize max}}}} \exp \Big (-\frac{n \rho ^{2} 2^{-J_{
\mbox{{\scriptsize max}}}}}{2 \big (M^{2}+2 M \|\psi \|_{\infty }
\rho \big )}\Big ),
\label{etape:fin1bis}
\end{align}
which is smaller than $C/n^{3/2}$ for sufficiently large $n$, as
$J_{\mbox{{\scriptsize max}}}\leq J_{n}=\log _{2}(\sqrt{n})$.%

\medskip\noindent
\textbf{Step 3: Upper bound of the second term in the r.h.s. of \eqref{etape8}}

\medskip
For the second term in the r.h.s. of \eqref{etape8},
%
%e60 #&#
%e61 #&#
\begin{align}
& 2\langle \widehat{h}_{\mathcal{J},\ell }-h_{\mathcal{J},
\ell }, h_{\mathcal{J},\ell } \rangle -\|h_{\ell }-h_{\mathcal{J},
\ell }\|^{2}_{2} -\zeta _{n}-\mbox{pen}_{2}(\mathcal{J})
\nonumber
\\
=
& 2 \sum _{j\in \mathcal{J}} \sum _{k\in \mathbb{Z}} \bar{\gamma }
_{n}\big (\psi _{jk}\big ) \beta _{jk}^{\ell }-\|h_{\ell }-h_{\mathcal{J},
\ell }\|^{2}_{2} -2\bar{\gamma }_{n}\big (h_{\ell }\big )-\mbox{pen}
_{2}(\mathcal{J})
\nonumber
\\
=
& 2 \bar{\gamma }_{n}\Big (\sum _{j\in \mathcal{J}}
\sum _{k\in \mathbb{Z}} \beta _{jk}^{\ell }\psi _{jk}\Big ) -\|h_{\ell }-h
_{\mathcal{J},\ell }\|^{2}_{2} -2\bar{\gamma }_{n}\big (h_{\ell }
\big )-\mbox{pen}_{2}(\mathcal{J})
\nonumber
\\
=
& 2 \bar{\gamma }_{n}\big (h_{\mathcal{J},\ell }-h_{\ell }\big ) -\|h
_{\ell }-h_{\mathcal{J},\ell }\|^{2}_{2} -\mbox{pen}_{2}(\mathcal{J})
\label{etape4:egalite}%
\\
\leq
& \Big (\frac{\bar{\gamma }_{n}\big (h_{\mathcal{J},\ell }-h_{
\ell }\big )}{\|h_{\ell }-h_{\mathcal{J},\ell }\|_{2}}\Big )^{2}-
\mbox{pen}_{2}(\mathcal{J}) = \bar{\gamma }_{n}^{2} \Big (\frac{h_{
\mathcal{J},\ell }-h_{\ell }}{\|h_{\ell }-h_{\mathcal{J},\ell }\|_{2}}
\Big ) -\mbox{pen}_{2}(\mathcal{J})
\label{etape4}%
,
\end{align}
by using the identity $2ab-b^{2}\leq a^{2}$. Setting $
\varphi _{\mathcal{J}}=\frac{h_{\mathcal{J},\ell }-h_{\ell }}{\|h_{
\ell }-h_{\mathcal{J},\ell }\|_{2}}$ and using Bernstein's formula (see
\cite[p.25]{massart_concentration}), we have for all $x>0$:
%
%e62 #&#
\begin{equation}
\mathbb{P}\Big (\bar{\gamma }_{n} \big (\varphi _{\mathcal{J}}\big )
\geq \sqrt{\frac{2M^{2}}{n}x}+\frac{2M\|\varphi _{\mathcal{J}}\|_{
\infty }}{n}x \Big )\leq e^{-x}.
\end{equation}
Setting $x=x_{\mathcal{J}}+\xi $ with now
%
%e63 #&#
\begin{equation}
\label{choix:xJ-2}
x_{\mathcal{J}}= \mbox{Card}(\mathcal{J}) \log 2,
\end{equation}
and using that $(a+b)^{2}\leq 2 a^{2}+2b^{2}$, we obtain that
%
%e64 #&#
\begin{align}
\mathbb{P}\Big (\bar{\gamma }^{2}_{n} \big (\varphi _{\mathcal{J}}\big ) -
\Big [ \frac{4M^{2} }{n} x_{\mathcal{J}}+\frac{16 M^{2} \|
\varphi _{\mathcal{J}}\|_{\infty }^{2}}{n^{2}} x_{\mathcal{J}}^{2}
\Big ]\geq r_{n}(\xi ) \Big )\leq e^{-x_{\mathcal{J}}}e^{-\xi },
\label{etape15}
\end{align}
where and
\begin{equation*}
r_{n}(\xi )=\frac{16 M^{2} \|\varphi _{\mathcal{J}}\|_{\infty }^{2} \xi
^{2}}{n^{2}}+\frac{4M^{2}\xi }{n}.
\end{equation*}
Let us consider the square bracket in \eqref{etape15}. Recall
\eqref{choix:xJ-2}. Because $\mbox{Card}(\mathcal{J})\leq J_{n} =\log
_{2}\big (\sqrt{n})$, $x_{\mathcal{J}}/n$ converges to zero
when $n\rightarrow +\infty $ and it is possible to choose a constant
$K$ and $n_{0}$ sufficiently large such that for all $n\geq n_{0}$,
%
%e65 #&#
\begin{equation}
\mbox{pen}_{2}(\mathcal{J}) \geq \frac{4M^{2} }{n}
x_{\mathcal{J}}+\frac{16 M^{2} \|\varphi _{\mathcal{J}}\|_{\infty }
^{2}}{n^{2}} x_{\mathcal{J}}^{2},
\end{equation}
where we recall that $\mbox{pen}_{2}(\mathcal{J})$ has been defined in
\eqref{def:penalisations}. Then, this yields
%
%e66 #&#
\vspace*{-3pt}
\begin{align}
\mathbb{P}\Big (\bar{\gamma }^{2}_{n} \big (\varphi _{\mathcal{J}}\big ) -
\mbox{pen}_{2}(\mathcal{J}) \geq r_{n}(\xi ) \Big )\leq e^{-x_{
\mathcal{J}}}e^{-\xi }.
\label{etape6}
\end{align}
From this, we deduce that
\vspace*{-4pt}
%e67 #&#
\begin{align}
&\mathbb{E}\Big ( \big (2\langle \widehat{h}_{\mathcal{J},
\ell }-h_{\mathcal{J},\ell }, h_{\mathcal{J},\ell } \rangle -\|h_{
\ell }-h_{\mathcal{J},\ell }\|^{2}_{2} -\zeta _{n}-\mbox{pen}_{2}(\mathcal{J})\big )^{2}_{+}\Big )
\nonumber
\\[-2pt]
\leq
& \mathbb{E}\Big ( \Big [\bar{\gamma }^{2}_{n} \big (
\varphi _{\mathcal{J}}\big )-\mbox{pen}_{2}(\mathcal{J})\Big ]^{2}\Big )
=
\int _{0}^{+\infty } 2t \ \mathbb{P}\big (\bar{\gamma }^{2}_{n} \big (
\varphi _{\mathcal{J}}\big )-\mbox{pen}_{2}(\mathcal{J})>t\big )dt
\nonumber
\\[-2pt]
\leq
& C e^{-x_{\mathcal{J}}} \int _{0}^{+\infty } t\ \exp \Big (-\frac{n}{8
\|\varphi _{\mathcal{J}}\|_{\infty }^{2}} \big (\sqrt{1+\frac{4 t \|
\varphi _{\mathcal{J}}\|_{\infty }^{2}}{M^{2}}}-1\big )\Big )dt
\leq \frac{C e^{-x_{\mathcal{J}}}}{n^{2}}.
\label{etape:fin2}
\end{align}
The last inequality comes from the behaviour of the integrand when
$t$ is close to 0.%

From the choice of $x_{\mathcal{J}}$ \eqref{choix:xJ-2}, we have:
\vspace*{-4pt}
%e68 #&#
\begin{multline}
\frac{1}{n^{2}}\sum _{\mathcal{J}\subset \{-1,\dots , J_{n}\}} e^{-x
_{\mathcal{J}}} =\frac{1}{n^{2}}
\sum _{\mathcal{J}\subset \{-1,\dots , J_{n}\}} 2^{-\mbox{Card}(
\mathcal{J})}
\\
\leq \frac{C}{n^{2}} \sum _{k=0}^{J_{n}+2} 2^{-k}
{k \choose J_{n}+2 } = \frac{C}{n^{2}}\Big (\frac{3}{2}\Big )^{J_{n}+2}
\leq \frac{C}{n^{3/2}}.
\end{multline}

Gathering the results of Steps 1 to 3, we have by
\eqref{decompo:A1A2} and \eqref{etape8} that the first term in the
r.h.s. of \eqref{etape:but1} is smaller than $C
\log _{2}^{2}(n)/n^{3/2}$. This proves \eqref{oracle:etape1}.%

\medskip\noindent
\textbf{Step 4:}

\smallskip\noindent
Let us now consider the term $\mathbb{E}\Big [\Big (-\widehat{V}_{
\mathcal{J},\ell }+\mbox{pen}(\mathcal{J})+V_{\ell }+ \zeta _{n}
\Big )^{2}_{+}\Big ]$ in \eqref{oracle:etape1}. From \eqref{etape3} and
\eqref{etape4:egalite}:
\vspace*{-4pt}
%e69 #&#
\begin{align}
&\mathbb{E}\Big [\big (-\widehat{V}_{\mathcal{J},\ell }+\mbox{pen}(\mathcal{J})+V_{\ell }+ \zeta _{n}\big )^{2}_{+}\Big ]
\nonumber
\\
=
& \mathbb{E}\Big ( \big (\|h_{\ell }-h_{\mathcal{J},\ell }\|^{2}_{2}
- \| \widehat{h}_{\mathcal{J},\ell }-h_{\mathcal{J},\ell } \|^{2}_{2}
+2 \bar{\gamma }_{n}\big (h_{\ell }- h_{\mathcal{J},\ell }\big ) +
\mbox{pen}(\mathcal{J})\big )^{2}_{+}\Big )
\nonumber
\\
\leq
& 4 \Big ( \|h_{\ell }-h_{\mathcal{J},\ell }\|^{4}_{2} + 4
\mathbb{E}\Big (\bar{\gamma }^{2}_{n}\big (h_{\ell }- h_{\mathcal{J},
\ell }\big ) \Big )
+ \mathbb{E}\Big (\Big [ \| \widehat{h}_{\mathcal{J},\ell }-h_{
\mathcal{J},\ell } \|^{2}_{2}-\mbox{pen}_{1}(\mathcal{J})\Big ]^{2}
_{+}\Big )+ \mbox{pen}_{2}^{2}(\mathcal{J})\Big ).
\label{etape11}
\end{align}

For the second term in the r.h.s. of \eqref{etape11}, we have:
\vspace*{-4pt}
%e70 #&#
\begin{align}
\mathbb{E}\Big (\bar{\gamma }^{2}_{n}\big (h_{\ell }- h_{\mathcal{J},
\ell }\big ) \Big )=
& \mbox{Var}\Big ( \bar{\gamma }_{n}\big (h_{\ell }-
h_{\mathcal{J},\ell }\big )\Big )
\nonumber
\\
\leq
& \frac{1}{n} \mathbb{E}\Big (Y_{1}^{2} \big (h_{\ell }(G_{\ell }(X
^{1}_{\ell })) - h_{\mathcal{J},\ell }(G_{\ell }(X^{1}_{\ell }))
\big )^{2}\Big )
\nonumber
\\
\leq
& \frac{M^{2} \|h_{\ell }-h_{\mathcal{J},\ell }\|_{2}^{2}}{n}
\leq C \big (\frac{1}{n^{2}}+\|h_{\ell }-h_{\mathcal{J},\ell }\|_{2}
^{4}\big )
\label{etape12}
\end{align}
by using that $2ab\leq a^{2}+b^{2}$ for the last inequality.%

The third term in the r.h.s. of \eqref{etape11} has been treated in
\eqref{decompo:A1A2} previously. We established an upper bound in
$ \mbox{Card}^{2}(\mathcal{J}) /n^{2} $ (see \eqref{etape:fin1}). For
the fourth term, $\mbox{pen}_{2}^{2}(\mathcal{J})=K^{2} \log ^{2}(2)
\mbox{Card}^{2}(\mathcal{J})/n^{2}$ from \eqref{def:penalisations}.
Gathering these results, we obtain \eqref{oracle:etape2} and then
\eqref{oracle}.

%s5.2 #&#
\subsection{Proof of Corollary \ref{corol:vitesseconvergence}}

Plugging \eqref{oracle:etape2} in \eqref{oracle:etape1}, and using that
%
%e71 #&#
\begin{equation}
\mathbb{E}\big (\zeta ^{2}_{n}\big )= \frac{2}{n} \mbox{Var}\Big (Y_{1} h
_{\ell }\big (G_{\ell }(X^{1}_{\ell })\big )\Big )\leq \frac{2M^{2} \|h
_{\ell }\|_{2}^{2}}{n},
\end{equation}
we obtain:
%
%e72 #&#
\begin{equation}
\label{etape13}
\mathbb{E}\Big [\big (\widehat{V}_{\ell }-V_{\ell }\big )^{2}\Big ]\leq C
\Big [\inf _{\mathcal{J}\subset \{-1,\dots , J_{n}\}} \Big ( \|h_{\ell }-h
_{\mathcal{J},\ell }\|_{2}^{4} + \frac{\mbox{Card}^{2}(\mathcal{J})}{n
^{2}} \Big ) + \frac{1+\|h_{\ell }\|_{2}^{2}}{n}\Big ].
\end{equation}
If $h_{\ell }\in \mathcal{B}(\alpha ,2,\infty )$, then from Proposition
\ref{prop:besov}, we have for $\mathcal{J}=\{-1,\dots , J_{
\mbox{{\scriptsize max}}}\}$ that $\|h_{\ell }-h_{\mathcal{J},\ell }
\|_{2}^{4} \leq 2^{-4 \alpha \ J_{\mbox{{\scriptsize max}}}}$. Also, we
have seen that $\mbox{Card}^{2}(\mathcal{J})\leq C 2^{J_{
\mbox{{\scriptsize max}}}}$. Thus, for subsets $\mathcal{J}$ of the form
considered, the infimum is attained when choosing $J_{\mbox{{\scriptsize
max}}}= \frac{2}{4\alpha +1} \log _{2}(n)$. In this case, the infimum in
\eqref{etape13} is upper bounded by $n^{-8\alpha /(4\alpha +1)}$.

For $h_{\ell }$ in a ball of radius $R$, $\|h_{\ell }\|_{2}^{2}\leq R
^{2}$, and we can find an upper bound that does not depend on $h$.
Because the last term in \eqref{etape13} is in $1/n$, the elbow effect
is obtained by comparing the order of the first term in the r.h.s. ($n
^{-8\alpha /(4\alpha +1)}$) with $1/n$ when $\alpha $ varies.\qed

%a5.2 #&#

\maketitle

%%% Backmatter %%%

%Appendix
%\begin{appendix}
%\section{}\label{app1}
\appendix{}

%sA #&#
\section{Sobol indices}

The Sobol indices are based on the following decomposition for $f$ (see
Sobol \cite{sobol}). We recall the formulas here, with the notation
$X_{p+1}$ for the random variable $\varepsilon $:
%
%e73 #&#
\begin{align}
Y =
& f(X_{1},\dots , X_{p},\varepsilon )
\nonumber
\\
=
& f_{0} + \sum _{\ell =1}^{p+1} f_{\ell }(X_{\ell }) +
\sum _{1 \leq \ell _{1} < \ell _{2} \leq p+1} f_{\ell _{1} \ell _{2}}(X
_{\ell _{1}}, X_{\ell _{2}}) + \dots
+ f_{1, \dots ,p+1}(X_{1},\dots ,X_{p},\varepsilon )
\end{align}
where
\begin{align*}
& f_{0} = \mathbb{E}[Y],
\qquad
f_{\ell }(X_{\ell }) = \mathbb{E}[Y|X_{\ell }] - \mathbb{E}[Y],
\nonumber
\\
& f_{\ell _{1} \ell _{2}}(X_{\ell _{1}}, X_{\ell _{2}}) = \mathbb{E}[Y|X
_{\ell _{1}},X_{\ell _{2}}]-\mathbb{E}[Y|X_{\ell _{1}}]-\mathbb{E}[Y|X
_{\ell _{2}}]-\mathbb{E}[Y],\quad \dots
\nonumber
\end{align*}
Then, the variance of $Y$ can be written as:
%
%e74 #&#
\begin{equation}
\mbox{Var}(Y)=\sum _{\ell =1}^{p+1}V_{\ell }+
\sum _{1 \leq \ell _{1} < \ell _{2} \leq p+1} V_{\ell _{1} \ell _{2}} +
\dots + V_{1 \dots p+1}
\end{equation}
where
%
%e75 #&#
\begin{align}
& V_{\ell }= \mbox{Var}(E[Y|X_{\ell }]),
\qquad
V_{\ell _{1} \ell _{2}} = \mbox{Var}(E[Y|X_{\ell _{1}}, X_{\ell _{2}}]) -
V_{\ell _{1}} - V_{\ell _{2}}, \dots
\nonumber
\\
& V_{1\dots p+1} = \mbox{Var}(Y) - \sum _{\ell =1}^{p+1} V_{\ell }-
\sum _{1 \leq \ell _{1} < \ell _{2} \leq p+1} V_{\ell _{1} \ell _{2}} -
\dots
- \sum _{1 \leq \ell _{1} < \dots < \ell _{p} \leq p+1} V_{\ell _{1}
\dots \ell _{p}}
\end{align}
The first order indices are then defined as:
%
%e76 #&#
\begin{equation}
S_{\ell }=V_{\ell }/\mbox{Var}(Y)=\mbox{Var}(E[Y|X_{\ell }])/
\mbox{Var}(Y)
\end{equation}
$S_{\ell }$ corresponds to the part of the variance that can be
explained by the variance of $Y$ due to the variable $X_{\ell }$ alone.
In the same manner, we define the second order indices, third order
indices, etc. by dividing the variance terms by $\mbox{Var}(Y)$.

%\end{appendix}

%Acknowledgments
%\section*{Acknowledgments} % Acknowledgments/Acknowledgment - following manuscript

%Supplementary material
%\begin{supplement}
%\stitle{}
%\slink[doi]{10.1214/XX-EJSXXXXSUPP}
%\sdatatype{} %.pdf, .zip
%\sfilename{}
%\sdescription{}
%\end{supplement}

%References
{\footnotesize

}
\end{document}